\documentclass[11pt, twoside]{article}
\usepackage{latexsym}
\usepackage{amsmath}
\usepackage{amssymb}
\usepackage[all]{xy}
\usepackage{amsfonts}
\usepackage{verbatim}
\usepackage{amsthm}
\usepackage{bm}
\usepackage{mathrsfs}
\usepackage{epsfig}
\usepackage{xy}
\usepackage{array}
\usepackage{stmaryrd}
\usepackage{graphicx,color}
\usepackage{xcolor}
\usepackage{tikz}
\usetikzlibrary{arrows,calc}
\usepackage{etex}
\usepackage{mathdots}
\usepackage{float}
\usepackage{graphics}
\usepackage{pdflscape}
\usepackage{CJK}
\usepackage{anysize,hyperref}
\input xypic
\xyoption{all}
\usepackage{extarrows}
\usepackage[perpage,symbol]{footmisc}
\topmargin=-0.5truein \oddsidemargin=0truein
\evensidemargin=0truein \textwidth=6.2truein \textheight=9.5truein
\usepackage{setspace}
\linespread{1}

\allowdisplaybreaks[4]
\renewcommand{\cal}{\mathcal}
\def\A{\mathcal{A}}

\def\U{\mathcal{U}}
\def\V{\mathcal{V}}
\def\W{\mathcal{W}}
\def\S{\mathcal{S}}
\def\T{\mathcal{T}}
\def\P{\mathcal{P}}

\def\E{\mathbb{E}}
\def\B{\mathcal{B}}
\def\C{\mathcal{C}}
\def\s{\mathfrak{s}}

\def\del{\delta}
\def\dr{\ar@{->}[r]}

\def\X{\mathcal{X}}
\def\Y{\mathcal{Y}}

\def\Hom{\mbox{Hom}}

\def\cone{\mbox{Cone}}
\def\cocone{\mbox{CoCone}}

\def\Ker{\mbox{Ker}\hspace{.01in}}
\def\Im{\mbox{Im}\hspace{.01in}}

\begin{document}
\baselineskip=15pt
\title{\Large{\bf Recollements arising from cotorsion pairs on extriangulated\\[2mm] categories$^\bigstar$\footnotetext{\hspace{-1em}$^\bigstar$Yonggang Hu was supported by the China Scholarship Council (CSC) and Panyue Zhou was supported by the National Natural Science Foundation of China (Grant No. 11901190 and 11671221), and by the Hunan Provincial Natural Science Foundation of China (Grant No. 2018JJ3205), and by the Scientific Research Fund of Hunan Provincial Education Department (Grant No. 19B239).}}}
\medskip
\author{Yonggang Hu and Panyue Zhou}

\date{}

\maketitle
\def\blue{\color{blue}}
\def\red{\color{red}}

\newtheorem{theorem}{Theorem}[section]
\newtheorem{lemma}[theorem]{Lemma}
\newtheorem{corollary}[theorem]{Corollary}
\newtheorem{proposition}[theorem]{Proposition}
\newtheorem{conjecture}{Conjecture}
\theoremstyle{definition}
\newtheorem{definition}[theorem]{Definition}
\newtheorem{question}[theorem]{Question}
\newtheorem{remark}[theorem]{Remark}
\newtheorem{remark*}[]{Remark}
\newtheorem{example}[theorem]{Example}
\newtheorem{example*}[]{Example}
\newtheorem{condition}[theorem]{Condition}
\newtheorem{condition*}[]{Condition}
\newtheorem{construction}[theorem]{Construction}
\newtheorem{construction*}[]{Construction}

\newtheorem{assumption}[theorem]{Assumption}
\newtheorem{assumption*}[]{Assumption}

\baselineskip=17pt
\parindent=0.5cm

\begin{abstract}
\baselineskip=16pt
This paper is devoted to constructing some recollements of additive categories associated to concentric twin cotorsion pairs on an extriangulated category. As an application, this result generalizes the work by Chen-Liu-Yang in a triangulated case. Moreover, it highlights new phenomena when it applied to an exact category. Finally, we give some applications to illustrate our main results. In particular, we obtain the Krause's recollement whose the proofs are both elementary and very general.
\\[0.5cm]
\textbf{Key words:} extriangulated categories; recollements; cotorsion pairs; adjoint pairs.\\[0.2cm]
\textbf{ 2010 Mathematics Subject Classification:} 13D30; 18E05; 18E30; 18E10.
\medskip
\end{abstract}

\pagestyle{myheadings}
\markboth{\rightline {\scriptsize Yonggang Hu and  Panyue Zhou}}
         {\leftline{\scriptsize  Recollements arising from cotorsion pairs on extriangulated categories}}

\section{Introduction}
The recollement of triangulated categories was introduced in a geometric setting by Beilinson, Bernstein and Deligne \cite{BBD}. Nowadays it has become very powerful in understanding relationships among three algebraic, geometric or topological objects.
 A fundamental example of the recollement of abelian categories
is due to MacPherson and Vilonen \cite{MV}, in which it first appeared as an inductive step in the construction of perverse sheaves.
Later, Wang and Lin \cite{WL} defined the notion of recollement of addtive categories, which unifies the recollment of abelian categories and the recollement of
triangulated categories.
Recently, Chen, Liu and Yang \cite{CLY} introduced localization sequences and colocalization
sequences of additive categories which are similar to lower recollements and upper
recollements of triangulated categories respectively. They consider recollements of additive categories by cotorsion pairs in triangulated categories. Based on
this fact that a co-$t$-structure is a special cotorsion pair, they give some recollements
of additive categories associated to concentric twin cotorsion pairs in a triangulated
category by doing quotient.

Extriangulated categories were recently introduced by Nakaoka and Palu \cite{NP} by extracting
those properties of ${\rm Ext}^1$ on exact categories (which is itself a generalisation of the concept of a module category and an abelian category) and on triangulated categories that seem relevant from the point of view of cotorsion pairs. In particular, exact categories and triangulated
categories are extriangulated categories. There are a lot of examples of extriangulated categories which are neither exact  categories nor triangulated categories, see \cite{NP,ZZ}.
Hence, many results hold on exact categories and triangulated categories can be unified in the
same framework. Based on this idea, we extend Chen-Liu-Yang's results to extriangulated categories.

Our main result is the following.
\begin{theorem} \emph{(See Theorem \ref{main1} for more details)}
Let $\C$ be an extriangulated category with enough projectives and enough injectives, and $(\S,\T), (\U,\V)$ and $(\X,\Y)$ be cotorsion pairs on  $\C$ with
$\W=\S\cap\T = \U\cap\V =\X\cap\Y$. If $\X\cap\T = \V, \Y\subseteq\T, \Sigma\V\subseteq\V$ and
 $\Sigma\Y\subseteq\Y$, then there
are two recollements of additive categories as follows:
$$\xymatrixcolsep{5pc}\xymatrix{
&\V/\W\ar[r]^{F}
&\T/\W\ar@/_1.5pc/[l]_{F_{\lambda}} \ar@/^1.5pc/[l]^{F_{\del}} \ar[r]^{G}
&(\T\cap\U)/\W\ar@/^1.5pc/[l]^{G_{\del}}\ar@/_1.5pc/[l]_{G_{\lambda}}},$$
$$\xymatrixcolsep{5pc}\xymatrix{
&\V/\W\ar[r]^{F}
&\T/\W\ar@/_1.5pc/[l]_{F_{\lambda}} \ar@/^1.5pc/[l]^{F_{\del}} \ar[r]^{G'}
&\Y/\W\ar@/^1.5pc/[l]^{G'_{\del}}\ar@/_1.5pc/[l]_{G'_{\lambda}}},$$
where $F, G_{\lambda}$ and $G_{\del}$ are the full embeddings.
\end{theorem}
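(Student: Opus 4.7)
The strategy is to define the five auxiliary functors explicitly using the approximation conflations provided by the three cotorsion pairs, and then to verify the adjunctions and the kernel--image relation modulo the core $\W$. The functor $F$ is the inclusion $\V/\W \hookrightarrow \T/\W$, full and faithful because $\V \subseteq \T$; similarly $G_{\lambda}$ and $G_{\del}$ are the two copies of the inclusion $(\T \cap \U)/\W \hookrightarrow \T/\W$, and analogously $G'_{\lambda}, G'_{\del}$ come from $\Y \subseteq \T$ in the second diagram.

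For every $T \in \T$, I would extract four key conflations. From the cotorsion pair $(\X,\Y)$, a conflation $Y_T \to X_T \to T$ with $Y_T \in \Y$; the hypothesis $\Y \subseteq \T$ together with extension closure of $\T$ forces $X_T \in \X \cap \T = \V$. Dually from $(\X,\Y)$, a conflation $T \to Y^T \to X^T$ with $Y^T \in \Y$. From the cotorsion pair $(\U,\V)$, two conflations $T \to V^T \to U^T$ and $V'_T \to U'_T \to T$ with $V^T, V'_T \in \V$; again by extension closure of $\T$, $U'_T \in \U \cap \T$. I would then define $F_{\del}(T) := X_T$, $F_{\lambda}(T) := V^T$, $G(T) := U'_T$, and $G'(T) := Y^T$, with $G$ and $G'$ being the genuinely new content.

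The adjunctions $F_{\lambda} \dashv F \dashv F_{\del}$ are verified by applying $\Hom_\C(-,V)$ and $\Hom_\C(V,-)$ (for $V \in \V$) to the defining conflations and invoking $\Ext^1(\U,\V) = 0 = \Ext^1(\X,\Y)$ to obtain surjectivity; these surjections descend to isomorphisms on the quotient by $\W$ because the residual $\Hom$-groups factor through objects of $\W$ after one further approximation of $U^T$ or $Y_T$, with the closures $\Sigma\V \subseteq \V$ and $\Sigma\Y \subseteq \Y$ ensuring the relevant connecting terms stay in the correct subcategories. The adjunctions $G_{\lambda} \dashv G \dashv G_{\del}$ and the relation $GF = 0$ follow similarly from the conflation $V'_T \to U'_T \to T$ together with $\Ext^1(\U,\V) = 0$. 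The kernel--image condition reduces to: $G(T)$ lies in $\W$ iff $V'_T \to U'_T \to T$ splits modulo $\W$, iff $T$ is isomorphic in $\T/\W$ to an object of $\V$; here the closure $\Sigma\V \subseteq \V$ is exactly what is needed. The second recollement is established by the same argument with the roles of $(\U,\V)$ and $(\X,\Y)$ swapped in the definition of the middle functor, using $\Sigma\Y \subseteq \Y$ in place of $\Sigma\V \subseteq \V$.

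The main obstacle, and the point where the extriangulated setting genuinely differs from the triangulated treatment of Chen--Liu--Yang, is the functoriality of the assignments $T \mapsto X_T,\, V^T,\, U'_T,\, Y^T$ modulo $\W$. In a triangulated category the weak functoriality of cones already forces a well-defined functor on the quotient; in an extriangulated category one must instead invoke the axioms $(\mathrm{ET4})$ and $(\mathrm{ET4})^{\mathrm{op}}$ of Nakaoka--Palu to lift morphisms between conflations, and then use the closure assumptions $\Sigma\V \subseteq \V$ and $\Sigma\Y \subseteq \Y$ to show that the ambiguity in any such lift is absorbed by $\W$. Once this functoriality has been secured, the verification of the adjunctions and the remaining recollement axioms proceeds along the lines of Chen--Liu--Yang, adapted so as to work with conflations in place of triangles.
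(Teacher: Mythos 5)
There is a genuine gap, and it sits exactly at the hardest point of the theorem. Your constructions of $F$, $F_{\lambda}$, $F_{\del}$, $G$ and $G'$ agree with the paper's (Lemmas \ref{c4}--\ref{c7}): $F_{\lambda}(T)=V^T$ from $T\to V^T\to U^T$, $F_{\del}(T)=X_T$ from $Y_T\to X_T\to T$ with $X_T\in\X\cap\T=\V$, $G(T)=U'_T$ from $V'_T\to U'_T\to T$, $G'(T)=Y^T$ from $T\to Y^T\to X^T$, and the kernel--image identifications via $\Sigma\V\subseteq\V$ and $\Sigma\Y\subseteq\Y$ are the right ones. But your identification of the remaining adjoints is wrong: you assert that $G_{\lambda}$ and $G_{\del}$ are ``two copies of the inclusion'' $(\T\cap\U)/\W\hookrightarrow\T/\W$, and likewise that both $G'_{\lambda}$ and $G'_{\del}$ are the inclusion $\Y/\W\hookrightarrow\T/\W$. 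Only one adjoint on each side is the inclusion: $G_{\lambda}$ (left adjoint of $G$) and $G'_{\del}$ (right adjoint of $G'$). The statement that all of $F$, $G_{\lambda}$, $G_{\del}$ are ``full embeddings'' means fully faithful functors, not that they are literal inclusions. If the inclusion $\iota$ were also right adjoint to $G$, one would need $(\C/\W)(Z_T,M)\cong(\C/\W)(T,M)$ for all $M\in\T\cap\U$; applying $\C(-,M)$ to $V_T\to Z_T\to T$ shows this would require $(\C/\W)(\V,\T\cap\U)=0$, which is not among the hypotheses and fails in general (Lemma \ref{y5} only gives $(\C/\W)(\U,\V)=0$). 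Your claim that $G\dashv G_{\del}$ ``follows similarly from the conflation $V'_T\to U'_T\to T$'' therefore cannot be substantiated: no right adjoint of $G$ comes out of that conflation.

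The missing idea is the paper's transport of adjoints between the two outer quotients. One defines $J\colon(\T\cap\U)/\W\to\Y/\W$ by $M\mapsto Y_M$ (from $M\to Y_M\to X_M$) and $K\colon\Y/\W\to(\T\cap\U)/\W$ by $Y\mapsto U_Y$ (from $V_Y\to U_Y\to Y$, with $U_Y\in\T\cap\U$ because $\Y,\V\subseteq\T$), checks these are well defined modulo $\W$ via Lemmas \ref{y6} and \ref{y7}, observes that $G$ and $G'$ are dense (indeed the identity on objects of $\T\cap\U$ resp.\ $\Y$), and then sets $G_{\del}:=G'_{\del}J$ and $G'_{\lambda}:=G_{\lambda}K$; the density gives natural isomorphisms $JG\cong J$ and $KG'\cong K$ from which the adjunctions $G\dashv G'_{\del}J$ and $G_{\lambda}K\dashv G'$ follow from the already-established ones. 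Without this step you have only a localization sequence and a colocalization sequence with different third terms (Lemmas \ref{c5} and \ref{c8}), not the two recollements claimed. Your closing remark that functoriality requires $(\mathrm{ET4})$ is also not how the paper proceeds: well-definedness modulo $\W$ is obtained from the vanishing $\E(\U,\V)=0$, $\E(\X,\Y)=0$ and Lemma \ref{y5}, with no use of the octahedron-type axioms.
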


Our main result generalizes Chen-Liu-Yang's results on a triangulated category and is new for an exact category case. In particular, applying our main results to complex categories, we reobtain the Krause's recollement. In fact, without considering homotopy categories, we reprove the existence of Krause's recollement by our main resluts.

This article is organised as follows: In Section 2, we review some basic concepts and results
concerning extriangulated categories. In Section 3, we show our main results. In Section 4, we give some applications to illustrate our main results.

\section{Preliminaries}
Let us briefly recall some definitions and basic properties of extriangulated categories from \cite{NP}.
We omit some details here, but the reader can find
them in \cite{NP}.

Let $\mathcal{C}$ be an additive category equipped with an additive bifunctor
$$\mathbb{E}: \mathcal{C}^{\rm op}\times \mathcal{C}\rightarrow {\rm Ab},$$
where ${\rm Ab}$ is the category of abelian groups. For any objects $A, C\in\mathcal{C}$, an element $\delta\in \mathbb{E}(C,A)$ is called an $\mathbb{E}$-extension.
Let $\mathfrak{s}$ be a correspondence which associates an equivalence class $$\mathfrak{s}(\delta)=\xymatrix@C=0.8cm{[A\ar[r]^x
 &B\ar[r]^y&C]}$$ to any $\mathbb{E}$-extension $\delta\in\mathbb{E}(C, A)$. This $\mathfrak{s}$ is called a {\it realization} of $\mathbb{E}$, if it makes the diagrams in \cite[Definition 2.9]{NP} commutative.
 A triplet $(\mathcal{C}, \mathbb{E}, \mathfrak{s})$ is called an {\it extriangulated category} if it satisfies the following conditions.
\begin{enumerate}
\item[\rm (1)] $\mathbb{E}\colon\mathcal{C}^{\rm op}\times \mathcal{C}\rightarrow \rm{Ab}$ is an additive bifunctor.

\item[\rm (2)] $\mathfrak{s}$ is an additive realization of $\mathbb{E}$.

\item[\rm (3)] $\mathbb{E}$ and $\mathfrak{s}$  satisfy the compatibility
conditions in \cite[Definition 2.12]{NP}.
 \end{enumerate}

\begin{remark}
Note that both exact categories and triangulated categories are extriangulated categories, see \cite[Example 2.13]{NP} and extension closed subcategories of triangulated categories are
again extriangulated, see \cite[Remark 2.18]{NP}. Moreover, there exist extriangulated categories which
are neither exact categories nor triangulated categories, see \cite[Proposition 3.30]{NP} and \cite[Example 4.14]{ZZ}.
\end{remark}

We will use the following terminology.
\begin{definition}{\cite{NP}}
Let $(\C,\E,\s)$ be an extriangulated category.
\begin{itemize}
\item[(1)] A sequence $A\xrightarrow{~x~}B\xrightarrow{~y~}C$ is called a {\it conflation} if it realizes some $\E$-extension $\del\in\E(C,A)$.
    In this case, $x$ is called an {\it inflation} and $y$ is called a {\it deflation}.

\item[(2)] If a conflation  $A\xrightarrow{~x~}B\xrightarrow{~y~}C$ realizes $\delta\in\mathbb{E}(C,A)$, we call the pair $( A\xrightarrow{~x~}B\xrightarrow{~y~}C,\delta)$ an {\it $\E$-triangle}, and write it in the following way.
$$A\overset{x}{\longrightarrow}B\overset{y}{\longrightarrow}C\overset{\delta}{\dashrightarrow}$$
We usually do not write this $``\delta"$ if it is not used in the argument.

\item[(3)] Let $A\overset{x}{\longrightarrow}B\overset{y}{\longrightarrow}C\overset{\delta}{\dashrightarrow}$ and $A^{\prime}\overset{x^{\prime}}{\longrightarrow}B^{\prime}\overset{y^{\prime}}{\longrightarrow}C^{\prime}\overset{\delta^{\prime}}{\dashrightarrow}$ be any pair of $\E$-triangles. If a triplet $(a,b,c)$ realizes $(a,c)\colon\delta\to\delta^{\prime}$, then we write it as
$$\xymatrix{
A \ar[r]^x \ar[d]^a & B\ar[r]^y \ar[d]^{b} & C\ar@{-->}[r]^{\del}\ar[d]^c&\\
A'\ar[r]^{x'} & B' \ar[r]^{y'} & C'\ar@{-->}[r]^{\del'} &}$$
and call $(a,b,c)$ a {\it morphism of $\E$-triangles}.

\item[(4)] An object $P\in\C$ is called {\it projective} if
for any $\E$-triangle $A\overset{x}{\longrightarrow}B\overset{y}{\longrightarrow}C\overset{\delta}{\dashrightarrow}$ and any morphism $c\in\C(P,C)$, there exists $b\in\C(P,B)$ satisfying $yb=c$.
We denote the subcategory of projective objects by $\cal P\subseteq\C$. Dually, the subcategory of injective objects is denoted by $\cal I\subseteq\C$.

\item[(5)] We say that $\C$ {\it has enough projective objects} if
for any object $C\in\C$, there exists an $\E$-triangle
$A\overset{x}{\longrightarrow}P\overset{y}{\longrightarrow}C\overset{\delta}{\dashrightarrow}$
satisfying $P\in\cal P$. We can define the notion of having enough injectives dually.

\end{itemize}
\end{definition}

Assume that $(\C, \E, \s)$ is an extriangulated category. By Yoneda's lemma, any $\E$-extension
$\del\in\E(C,A)$ induces natural transformations
$$\del_{\sharp}\colon\C(-,C)\Rightarrow\E(-,A)\ \ \textrm{and} \ \ \del^{\sharp}\colon \C(A,-)\Rightarrow\E(C,-).$$
For any $X\in\C$, these $(\del_{\sharp})_X $ and $\del_{X}^{\sharp}$ are given as follows:

(1) $(\del_{\sharp})_X\colon\C(X,C)\to \E(X,A); f\mapsto f^{\ast}\del$.

(2) $\del_{X}^{\sharp}\colon \C(A,X)\to \E(C,X); g\mapsto g_{\ast}\del$.

\begin{lemma}\label{lem}
Let $(\C, \E, \s)$ be an extriangulated category, and $$\xymatrix{A\ar[r]^{x}&B\ar[r]^{y}&C\ar@{-->}[r]^{\delta}&}$$
an $\E$-triangle. Then we have the following long exact sequence:
$$\C(-, A)\xrightarrow{\C(-,x)}\C(-, B)\xrightarrow{\C(-,y)}\C(-, C)\xrightarrow{\delta^{\sharp}_-}
\E(-, A)\xrightarrow{\E(-,x)}\E(-, B)\xrightarrow{\E(-,y)}\E(-, C);
$$
$$\C(C,-)\xrightarrow{\C(y,-)}\C(B,-)\xrightarrow{\C(x,-)}\C(A,-)\xrightarrow{\delta_{\sharp}^-}
\E(C,-)\xrightarrow{\E(y,-)}\E(B,-)\xrightarrow{\E(x,-)}\E(A,-)
.$$
\end{lemma}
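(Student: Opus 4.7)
The plan is to prove the first (contravariant) long exact sequence; the second follows by an entirely dual argument applied to the opposite extriangulated category $(\C\op, \E\op, \s\op)$, so I only have to deal with one direction carefully.

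The first task is to check that consecutive arrows compose to zero. Since $yx$ factors through the zero object in any conflation (a standard consequence of the realization axioms, cf.\ the discussion following \cite[Definition 2.12]{NP}), we get $\C(-, y) \circ \C(-, x) = \C(-, yx) = 0$ and likewise $\E(-,y) \circ \E(-,x) = 0$. For $\del^\sharp_- \circ \C(-, y) = 0$, observe that $y^{\ast}\del = 0$ in $\E(B, A)$, since pulling $\del$ back along $y$ produces a split $\E$-extension (the middle term acquires a section). The vanishing $\E(-, x) \circ \del^\sharp_- = 0$ follows from the naturality identity $x_\ast(g^{\ast}\del) = g^{\ast}(x_\ast\del)$ together with $x_\ast\del = 0$, which is the dual remark.

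Next I would handle exactness at the two Hom positions. At $\C(-,B)$, given $f \in \C(X,B)$ with $yf = 0$, the claim is that $f$ factors through $x$; this says $x$ is a weak kernel of $y$ and is built into the extriangulated axioms (essentially the content of \cite[Proposition 3.3]{NP}). At $\C(-,C)$, given $g \in \C(X,C)$ with $g^{\ast}\del = 0$, one pulls $\del$ back along $g$ to obtain a split conflation, whose splitting is exactly a lift $h \in \C(X,B)$ with $yh = g$; this uses only that realization commutes with pullbacks.

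The third and genuinely delicate point is exactness at $\E(-,A)$. Given $\eta \in \E(X,A)$ with $x_\ast\eta = 0$, realize $\eta$ by a conflation $A \longrightarrow M \longrightarrow X$, and push it out along $x$ to produce a conflation realizing $x_\ast\eta$, which therefore splits. Feeding the resulting retraction into axiom (ET4) (or its dual (ET4)$\op$) of \cite{NP} yields a morphism $g \colon X \to C$ together with a morphism of $\E$-triangles showing $g^{\ast}\del = \eta$. I expect this last step to be the main obstacle, since it genuinely requires the octahedral-type compatibility axiom rather than just the pullback/pushout formalism; once it is in place, the rest of the sequence assembles by combining the vanishing of consecutive compositions with the exactness facts already established.
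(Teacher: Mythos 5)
The paper offers no argument of its own here: it simply cites \cite[Propositions 3.3 and 3.11]{NP}, so you are effectively reconstructing the proof from [NP]. Your reconstruction is on track for most of the sequence, but it has one genuine gap: you never address exactness at $\E(-,B)$ (dually, at $\E(B,-)$ in the second sequence). You list the vanishing of consecutive compositions, exactness at the two Hom positions, and exactness at $\E(-,A)$, and then assert that ``the rest of the sequence assembles'' from these facts --- but exactness at the fourth interior position does not follow from them. It requires a separate argument: given $\theta\in\E(X,B)$ with $y_{\ast}\theta=0$, realize $\theta$ by $B\to N\to X$, apply (ET4) to the composable inflations $A\to B\to N$ to obtain an $\E$-triangle $A\to N\to E\overset{\delta''}{\dashrightarrow}$ together with a conflation $C\to E\overset{e}{\to} X$ realizing $y_{\ast}\theta$ and satisfying $x_{\ast}\delta''=e^{\ast}\theta$; since $y_{\ast}\theta=0$ this conflation splits, and pulling $\delta''$ back along a section $s$ of $e$ gives $\eta=s^{\ast}\delta''$ with $x_{\ast}\eta=s^{\ast}e^{\ast}\theta=\theta$. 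This is in fact the only place in the lemma where the octahedral axiom is indispensable.

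Relatedly, you misplace where (ET4) is needed. For exactness at $\E(-,A)$, your construction (realize $\eta$, use that $x_{\ast}\eta=0$ splits to produce $p\colon M\to B$ with $pm=x$) is correct, but the final step is an application of (ET3), not (ET4): the commutative square $pm=x\circ\mathrm{id}_A$ between the triangles realizing $\eta$ and $\delta$ yields, by (ET3), a morphism $g\colon X\to C$ with $(\mathrm{id}_A,p,g)$ a morphism of $\E$-triangles realizing $(\mathrm{id}_A,g)\colon\eta\to\delta$, which by definition means $\eta=g^{\ast}\delta$. The remaining steps (the vanishing of compositions via $y^{\ast}\delta=0$ and $x_{\ast}\delta=0$, the weak-kernel property at $\C(-,B)$, and the splitting argument at $\C(-,C)$) are fine and match [NP, Proposition 3.3]. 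So: correct skeleton, but you must add the $\E(-,B)$ position, and there the appeal to (ET4) is genuinely unavoidable.
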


\proof This follows from Proposition 3.3 and Proposition 3.11 in \cite{NP}. \qed

Now we recall higher extension groups from \cite{LN}.

Assume that $(\C, \E, \s)$ is an extriangulated category with enough projectives and enough injectives. For two subcategories $\X,\Y$ of $\C$,  $\cone(\X,\Y)$ is defined to be the subcategory of $\C$, consisting of objects $C$ which admits an $\E$-triangle
$\xymatrix{X\ar[r]^{x}&Y\ar[r]^{y}&C\ar@{-->}[r]^{\delta}&}$
where $X\in\X$ and $Y\in\Y$. Dually we can define $\cocone(\X,\Y)$.

For a subcategory $\C_1\subseteq\C$, put $\Omega^{0}\C_1=\C_1$, and define $\Omega^i\C_1$ for $i>0$ inductively by
\[ \Omega^i\C_1=\Omega(\Omega^{i-1}\C_1)=\cocone(\P,\Omega^{i-1}\C_1). \]
We call $\Omega^{i}\C_1$ the {\it $i$-th syzygy} of $\C_1$. Dually we define the {\it $i$-th cosyzygy} $\Sigma^i\C_1$ by $\Sigma^0\C_1=\C_1$ and $\Sigma^i\C_1=\cone(\Sigma^{i-1}\C_1,\mathcal{I})$ for $i>0$.

Let $X$ be any object in $\C$. It admits an $\E$-triangle
\[ X\to I^0\to \Sigma X \overset{}{\dashrightarrow}\quad (\text{ resp. } \Omega X\to P_0\to X \overset{}{\dashrightarrow}), \]
where $I^0\in \mathcal I$ (resp. $P_0\in \mathcal P$). We can obtain $\E$-triangles
\[ \Sigma^iX\to I^i\to \Sigma^{i+1}X \overset{}{\dashrightarrow} (\text{ resp. } \Omega^{i+1}\to P_i\to \Omega^iX \overset{}{\dashrightarrow}), \]
for $i>0$ recursively.

Liu and Nakaoka \cite{LN} defined higher extension groups as
$$\E^{i+1}(X,Y):=\E(X,\Sigma^iY)\cong\E(\Omega^iX,Y)$$ for $i\geq 0$, and proved the following.

\begin{lemma}\label{lem1}\emph{\cite[Proposition 5.2]{LN}}
Let $(\C, \E, \s)$ be an extriangulated category with enough projectives and enough injectives, and $$\xymatrix{A\ar[r]^{x}&B\ar[r]^{y}&C\ar@{-->}[r]^{\delta}&}$$
an $\E$-triangle. Then we have the following long exact sequence:
$$\cdots\xrightarrow{}\E^i(-, A)\xrightarrow{}\E^i(-, B)\xrightarrow{}\E^i(-, C)\xrightarrow{}
\E^{i+1}(-, A)\xrightarrow{}\E^{i+1}(-, B)\xrightarrow{}\E^{i+1}(-, C)\xrightarrow{}\cdots~(i\geq1);
$$
$$\cdots\xrightarrow{}\E^i(C,-)\xrightarrow{}\E^i(B,-)\xrightarrow{}\E^i(A,-)\xrightarrow{}
\E^{i+1}(C,-)\xrightarrow{}\E^{i+1}(B,-)\xrightarrow{}\E^{i+1}(A,-)\xrightarrow{}\cdots~(i\geq1)
.$$
\end{lemma}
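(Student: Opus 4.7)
The plan is to reduce the infinite long exact sequence to a collection of 6-term exact sequences, one at each level $i \geq 1$, and then glue them. Concretely, for each object $Y \in \C$ and each $i \geq 1$, I would establish exactness of
\begin{equation*}
\E^i(Y,A) \to \E^i(Y,B) \to \E^i(Y,C) \overset{\partial}{\to} \E^{i+1}(Y,A) \to \E^{i+1}(Y,B) \to \E^{i+1}(Y,C);
\end{equation*}
stacking these glues into the full infinite sequence. The second (covariant) long exact sequence is treated symmetrically with cosyzygies $\Sigma^i$ replacing syzygies $\Omega^i$, using the injective version of the argument.

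For the base case $i=1$, fix an $\E$-triangle $\Omega Y \to P \to Y \overset{\rho}{\dashrightarrow}$ with $P\in\P$, so that by definition $\E^{2}(Y,-)\cong \E(\Omega Y,-)$. Applying Lemma \ref{lem} to this triangle in the second variable at $C$ and using $\E(P,C)=0$ yields a surjection $\rho_\sharp^C:\C(\Omega Y,C)\twoheadrightarrow \E(Y,C)$. Applying Lemma \ref{lem} to the original triangle $A\to B\to C\overset{\delta}{\dashrightarrow}$ in the first variable at $\Omega Y$ gives the connecting homomorphism $\delta^\sharp_{\Omega Y}:\C(\Omega Y,C)\to \E(\Omega Y,A)$, $f\mapsto f^\ast\delta$. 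I would then define
\begin{equation*}
\partial:\E(Y,C)\to \E^{2}(Y,A),\qquad \partial(\eta):=\tilde\eta^\ast\delta,
\end{equation*}
where $\tilde\eta\in\C(\Omega Y,C)$ is any lift of $\eta$ along $\rho_\sharp^C$. Well-definedness is immediate: two lifts differ by a morphism factoring through $P$, and $\E(P,A)=0$ kills the corresponding contribution. This is where projectivity of $P$ is essential.

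Exactness at each of the six positions is a routine diagram chase; only the two positions adjacent to $\partial$ require real work. At $\E(Y,C)$: $\partial\eta=0$ iff $\tilde\eta^\ast\delta=0$, iff by Lemma \ref{lem} (applied to the original triangle) $\tilde\eta$ factors as $y\circ b$ for some $b:\Omega Y\to B$; then $\eta=\rho_\sharp^C(y\circ b)=y_\ast\rho_\sharp^B(b)$ lies in the image of $\E(Y,B)\to\E(Y,C)$. At $\E^{2}(Y,A)=\E(\Omega Y,A)$: given $\xi$ with $x_\ast\xi=0$, Lemma \ref{lem} supplies some $\tilde\eta:\Omega Y\to C$ with $\tilde\eta^\ast\delta=\xi$, whence $\xi=\partial(\rho_\sharp^C(\tilde\eta))$. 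I expect this diagram chase, juggling two simultaneous invocations of Lemma \ref{lem} with the identification $\E^{2}(Y,-)\cong\E(\Omega Y,-)$, to be the main technical step.

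The inductive step is then formal: substitute $\Omega^{i-1}Y$ for $Y$ in the base case. Since $\Omega^{j}(\Omega^{i-1}Y)=\Omega^{i+j-1}Y$, we get $\E^{j}(\Omega^{i-1}Y,-)\cong \E^{i+j-1}(Y,-)$, which translates the level-$1$ 6-term sequence into the level-$i$ one for $Y$. Gluing over all levels completes the first long exact sequence. The second long exact sequence follows dually from an inflation $C\to I\to\Sigma C\dashrightarrow$ with $I$ injective and the identification $\E^{i+1}(X,Y)\cong \E(X,\Sigma^{i}Y)$.
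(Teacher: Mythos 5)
Your proof is correct. Note, however, that the paper offers no proof of this statement at all: it is quoted verbatim from Liu--Nakaoka \cite[Proposition 5.2]{LN}, so there is nothing internal to compare against. Your argument is the standard dimension-shifting one that the cited source also uses: construct the connecting map $\partial\colon\E(Y,C)\to\E(\Omega Y,A)$ by lifting along the surjection $\C(\Omega Y,C)\twoheadrightarrow\E(Y,C)$ coming from $\E(P,C)=0$, check well-definedness via $\E(P,A)=0$, verify exactness at the two positions flanking $\partial$ using the two six-term sequences of Lemma \ref{lem}, and then shift by replacing $Y$ with $\Omega^{i-1}Y$. The only points worth making explicit in a full write-up are the naturality identity $\rho^{\sharp}_{C}(y\circ b)=y_{*}\rho^{\sharp}_{B}(b)$ you invoke (which is just functoriality of $\E$ in the second variable) and the compatibility of the identifications $\E^{j}(\Omega^{i-1}Y,-)\cong\E^{i+j-1}(Y,-)$ with the maps induced by $x$ and $y$, so that the stacked six-term sequences genuinely glue; both are routine.
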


\begin{remark}\label{lem1}\cite[Lemma 2.14]{ZhZ}
Let $(\C, \E, \s)$ be an extriangulated category with enough projectives and enough injectives.
Then

(a) $P$ is projective object in $\C$ if and only if $\E^i(P,C)$=0, for any $C\in\C$ and $i\geq1$.

(b) $I$ is injective object in $\C$ if and only if $\E^i(C,I)$=0, for any $C\in\C$ and $i\geq1$.
\end{remark}

Let $\C$ be an additive category.
For two objects $A,B$ in $\X$ denote by $\X(A,B)$ the subgroup of $\Hom_{\C}(A,B)$ consisting of those morphisms which factor through an object in $\X$. Denote by $\C/\X$ the \emph{quotient category} of $\C$ modulo $\X$: the objects are the same as the ones in $\C$, for two objects $A$ and $B$ the Hom space is given by the quotient group $\Hom_{\C}(A,B)/\X(A,B)$.
Note that the quotient category $\C/\X$ is an additive category. We denote $\overline{f}$ the image of $f\colon A\to B$ of $\C$ in $\C/\X$.

\begin{remark}\label{y1}
If $\X$ is closed under direct summands, for any $C\in\C$ we have
$C\cong0$ in $\C/\X$ if and only if $C\in\X$.
\end{remark}

\begin{definition}\cite[Definition 4.1]{NP}
Assume that $(\C,\E,\s)$ is an extriangulated category.
Let $\U$ and $\V$ be two subcategories of $\C$. We call $(\U,\V)$ a \emph{cotorsion pair} if it satisfies the following conditions:
\begin{itemize}
\setlength{\itemsep}{0.5pt}
\item[{\rm (a)}] $\E(\U,\V)=0$.

\item[{\rm (b)}] For any object $C\in \C$, there are two $\E$-triangles
$$
V_C\rightarrow U_C\rightarrow C\overset{}{\dashrightarrow}~~\textrm{and}~~
C\rightarrow V^C\rightarrow U^C\overset{}{\dashrightarrow}
$$
satisfying $U_C,U^C\in \U$ and $V_C,V^C\in \V$.
\end{itemize}
\end{definition}

By definition of a cotorsion pair, we can immediately conclude:

\begin{remark}Assume that $(\C,\E,\s)$ is an extriangulated category.
 Let $(\U,\V)$ be a cotorsion pair on $\C$.  Then

 (a) $C$ belongs to $\U$ if and only if $\E(C,V)=0$;

(b) $C$ belongs to $\V$ if and only if $\E(\U,C)=0$;

(c)  $\U$ and $\V$ are closed under direct sums,
direct summands and extensions.
\end{remark}

\begin{definition}\cite[Definition 4.12]{NP}
Assume that $(\C,\E,\s)$ is an extriangulated category.
Let $(\S,\T)$ and $(\U,\V)$ be cotorsion pairs on $\C$. Then the pair
$TCP:=((\S,\T),(\U,\V))$ is called a \emph{twin cotorsion pair} if it satisfies
$\E(\S,\V)=0$. Note that this condition is equivalent to $\S\subseteq\U$, and also
to $\V\subseteq\T$. If it moreover it satisfies
$\S\cap\T=\U\cap\V$, then $TCP$ is called a \emph{concentric} twin cotorsion pair.
\end{definition}

\begin{lemma}\label{y5}
Assume that $(\C,\E,\s)$ is an extriangulated category.
 Let $(\U,\V)$ be a cotorsion pair on $\C$ and $\W:=\U\cap\V$. Then $(\C/\W)(\U/\W,\V/\W)=0$.
\end{lemma}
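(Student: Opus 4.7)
Let $f\in\C(U,V)$ with $U\in\U$ and $V\in\V$; I must show that $f$ factors through some object of $\W=\U\cap\V$, for this is exactly the statement $(\C/\W)(\U/\W,\V/\W)=0$.

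The plan is to use the defining $\E$-triangle of the cotorsion pair applied to the object $U$, combined with the orthogonality $\E(\U,\V)=0$. More precisely, since $(\U,\V)$ is a cotorsion pair, there is an $\E$-triangle
$$U\longrightarrow V^{U}\longrightarrow U^{U}\dashrightarrow$$
with $V^{U}\in\V$ and $U^{U}\in\U$. Because $U,U^{U}\in\U$ and $\U$ is closed under extensions (as recorded in the remark after the definition of a cotorsion pair), the middle term satisfies $V^{U}\in\U$ as well; hence $V^{U}\in\U\cap\V=\W$.

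Next I apply the contravariant functor $\C(-,V)$ to the above $\E$-triangle. By Lemma \ref{lem}, the following sequence is exact:
$$\C(V^{U},V)\longrightarrow\C(U,V)\longrightarrow\E(U^{U},V).$$
Since $U^{U}\in\U$ and $V\in\V$, the defining property of the cotorsion pair gives $\E(U^{U},V)=0$. Therefore the map $\C(V^{U},V)\to\C(U,V)$ is surjective, and so there exists $g\in\C(V^{U},V)$ whose composition with $U\to V^{U}$ equals $f$. This exhibits $f$ as factoring through $V^{U}\in\W$, so $\overline{f}=0$ in $\C/\W$, finishing the proof.

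There is no serious obstacle: the argument is a direct translation of the classical cotorsion-pair orthogonality argument to the extriangulated setting, and the only subtlety is the choice of which of the two approximation $\E$-triangles to use (the dual approach, taking instead the $\E$-triangle $V_{V}\to U_{V}\to V\dashrightarrow$ attached to $V$ and applying $\C(U,-)$, would work equally well and produce a factorization through $U_{V}\in\W$).
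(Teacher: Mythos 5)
Your proof is correct and is essentially the paper's argument in mirror image: the paper resolves $V$ by the $\E$-triangle $V'\to U'\to V\dashrightarrow$ (so $U'\in\W$ by closure of $\V$ under extensions) and applies $\C(U,-)$, whereas you coresolve $U$ by $U\to V^U\to U^U\dashrightarrow$ and apply $\C(-,V)$; both factor $f$ through the middle term lying in $\W$ via the vanishing of the relevant $\E$-group. As you yourself note, the two choices are interchangeable, so there is nothing further to add.
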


\proof Let $f\in\C(U,V)$ be any morphism, where $U\in\U$ and $V\in\V$.
By definition of a cotorsion pair, there exists an $\E$-triangle
$$V'\overset{x}{\longrightarrow} U'\overset{y}{\longrightarrow} V\overset{\del}\dashrightarrow$$
where $V'\in\V$ and $U'\in\U$.
Since $\V$ is closed under extensions, we have $U'\in\V$ and then $U'\in\U\cap\V=\W$.
Applying the functor $\C(U,-)$ to the $\E$-triangle $V'\overset{x}{\longrightarrow} U'\overset{y}{\longrightarrow} V\overset{\del}\dashrightarrow$, by Lemma \ref{lem}, we have the following exact sequence:
$$\C(U,U')\xrightarrow{\C(U,~y)}\C(U,V)\xrightarrow{}\E(U,V')=0.$$
It follows that there exists a morphism $a\colon U\to U'$ such that
$f=y\circ a$. Since $U'\in\W$, this means $\overline{f}=0$. \qed

\begin{lemma}\label{y6}
Assume that $(\C,\E,\s)$ is an extriangulated category.
Let $(\U,\V)$ be a cotorsion pair on $\C$ and $\W:=\U\cap\V$, and let
$f\colon A\rightarrow B$ be any morphism in $\mathcal{C}$.
\begin{enumerate}
\item[\rm (1)] Let
\begin{eqnarray*}
V_A\xrightarrow{}U_A\overset{u_A}{\longrightarrow}A\dashrightarrow\\
V_B\xrightarrow{}U_B\overset{u_B}{\longrightarrow}B\dashrightarrow
\end{eqnarray*}
be any $\E$-triangles satisfying $U_A,U_B\in\mathcal{U}$ and $V_A,V_B\in\mathcal{V}$.
Then there exists a morphism $f_U\in\mathcal{C}(U_A,U_B)$ such that
\begin{eqnarray*}
f\circ u_A=u_B\circ f_U.\\
\xy
(-8,8)*+{U_A}="0";
(-8,-8)*+{U_B}="2";
(8,8)*+{A}="4";
(8,-8)*+{B}="6";
{\ar_{f_U} "0";"2"};
{\ar_{u_B} "2";"6"};
{\ar^{u_A} "0";"4"};
{\ar^{f} "4";"6"};
{\ar@{}|\circlearrowright "0";"6"};
\endxy
\end{eqnarray*}
Moreover, $f_U$ with this property is unique in $(\C/\W)(U_A,U_B)$.
\item[\rm (2)] Dually, for any $\E$-triangles
\begin{eqnarray*}
 A\rightarrow V^{\prime}_A\rightarrow U^{\prime}_A\dashrightarrow\\
B\rightarrow V^{\prime}_B\rightarrow U^{\prime}_B\dashrightarrow
\end{eqnarray*}
with $U^{\prime}_A,U^{\prime}_B\in\mathcal{U}$ and $V^{\prime}_A,V^{\prime}_B\in\mathcal{V}$, there exists a morphism $f^{\prime}_V\in\mathcal{C}(V^{\prime}_A,V^{\prime}_B)$ compatible with $f$, uniquely up to $\mathcal{W}$.
\end{enumerate}
\end{lemma}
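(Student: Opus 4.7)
The plan is to prove part (1) by a standard two-step argument — existence by exactness, uniqueness modulo $\W$ by a further factorisation through a $\W$-object — and then deduce part (2) by a formal duality.

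For existence, I would apply the functor $\C(U_A,-)$ to the $\E$-triangle $V_B\to U_B\xrightarrow{u_B}B\dashrightarrow$. By Lemma \ref{lem} this yields the exact sequence
$$\C(U_A,U_B)\xrightarrow{\C(U_A,\,u_B)}\C(U_A,B)\xrightarrow{}\E(U_A,V_B).$$
Since $U_A\in\U$ and $V_B\in\V$, the defining vanishing of the cotorsion pair gives $\E(U_A,V_B)=0$, so the element $f\circ u_A\in\C(U_A,B)$ lifts to a morphism $f_U\in\C(U_A,U_B)$ with $u_B\circ f_U=f\circ u_A$. This is the desired $f_U$.

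The main obstacle, and the interesting part of the proof, is uniqueness modulo $\W$. Suppose $f_U,f_U'$ both satisfy $u_B\circ(-)=f\circ u_A$ and set $g=f_U-f_U'$, so $u_B\circ g=0$. By exactness of the same sequence from Lemma \ref{lem}, $g$ factors as $g=v\circ h$ for some $h\colon U_A\to V_B$, where $v\colon V_B\to U_B$ is the first arrow of the $\E$-triangle. Now $g$ factors through an object of $\V$, but not obviously through $\W$. To upgrade this, I would apply the cotorsion pair to $V_B$ itself: there is an $\E$-triangle $V'\to U'\xrightarrow{u}V_B\dashrightarrow$ with $U'\in\U$ and $V'\in\V$. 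Since $V',V_B\in\V$ and $\V$ is closed under extensions, $U'\in\V$ too, hence $U'\in\U\cap\V=\W$. Applying $\C(U_A,-)$ to this $\E$-triangle gives the exact sequence
$$\C(U_A,U')\xrightarrow{\C(U_A,\,u)}\C(U_A,V_B)\xrightarrow{}\E(U_A,V')=0,$$
where the last term vanishes because $U_A\in\U$ and $V'\in\V$. Thus $h=u\circ h''$ for some $h''\colon U_A\to U'$, and consequently $g=v\circ u\circ h''$ factors through $U'\in\W$. This shows $\overline g=0$ in $\C/\W$, so $\overline{f_U}=\overline{f_U'}$.

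Part (2) is formally dual: apply $\C(-,V_B')$ to the $\E$-triangle $A\to V_A'\to U_A'\dashrightarrow$ and use $\E(U_A',V_B')=0$ for existence, then factor the difference of two lifts through the cotorsion pair's $\E$-triangle attached to $U_B'$, obtaining a factorisation through an object of $\W$ by the dual closure property (that $\U$ is closed under extensions). No new ingredients are needed beyond Lemma \ref{lem}, the axioms of a cotorsion pair, and the extension-closure of $\U$ and $\V$ already noted in the preceding remark.
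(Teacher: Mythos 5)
Your proof is correct and follows essentially the same route as the paper: existence from $\E(U_A,V_B)=0$ via the long exact sequence, and uniqueness by factoring the difference of two lifts through $V_B$ and then upgrading that to a factorisation through $\W$. The only difference is that where the paper simply invokes Lemma~\ref{y5} (which says $(\C/\W)(\U/\W,\V/\W)=0$) to kill the morphism $U_A\to V_B$, you re-derive that lemma inline by lifting along the $\E$-triangle $V'\to U'\to V_B$ with $U'\in\W$.
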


\proof We only show {\rm (1)}. Existence immediately follows from $\E(U_A,V_B)=0$. Moreover if $f_U^1$ and $f_U^2$ in $\mathcal{C}(U_A,U_B)$ satisfies
\[ f_U^1\circ u_A=u_B\circ f_U=f^2_U\circ u_A, \]
then by $(f_U^1-f_U^2)\circ u_A=0$, there exists $w\in\mathcal{C}(U_A,V_B)$ such that $f_U^1-f_U^2$ factors through $w$.
\[
\xy
(-24,-8)*+{V_B}="0";
(-8,-8)*+{U_B}="2";
(8,-8)*+{B}="4";
(24,-8)*+{}="6";
(-8,8)*+{U_A}="8";
(8,8)*+{A}="10";
(-18,4)*+{}="12";
{\ar "0";"2"};
{\ar "2";"4"};
{\ar@{-->} "4";"6"};
{\ar^{f_U^1-f_U^2} "8";"2"};
{\ar^{f} "10";"4"};
{\ar@{-->}_{w} "8";"0"};
{\ar^{u_A} "8";"10"};
{\ar@{}|\circlearrowright "2";"12"};
\endxy
\]
By Lemma \ref{y5} we have $\overline{w}=0$, and then $\overline{f_U^1}=\overline{f_U^2}$.\qed

\begin{lemma}\label{y7}
Assume that $(\C,\E,\s)$ is an extriangulated category.
Let $(\U,\V)$ be a cotorsion pair on $\C$, $\W:=\U\cap\V$ and $C$ be any object in $\mathcal{C}$.
\begin{enumerate}
\item[\rm (1)] For any $\E$-triangles
\begin{eqnarray*}
V\xrightarrow{}U\overset{u}{\longrightarrow}C\dashrightarrow\\
V'\xrightarrow{}U^{\prime}\overset{u^{\prime}}{\longrightarrow}C\dashrightarrow
\end{eqnarray*}
satisfying $U,U^{\prime}\in\mathcal{U}$ and $V,V^{\prime}\in\mathcal{V}$, there exists a morphism $s\in\mathcal{C}(U,U^{\prime})$ compatible with $u$ and $u^{\prime}$, such that $\overline{s}$ is an isomorphism.
\[
\xy
(0,8)*+{U}="0";
(0,-8)*+{U^{\prime}}="2";
{\ar_{\rotatebox{90}{$\cong$}}^{\overline{s}} "0";"2"};
\endxy
\qquad
\xy
(-4,8)*+{U}="0";
(-4,-8)*+{U^{\prime}}="2";
(6,0)*+{C}="4";
(-8,0)*+{}="6";
{\ar_{s} "0";"2"};
{\ar^{u} "0";"4"};
{\ar_{u^{\prime}} "2";"4"};
{\ar@{}|\circlearrowright "4";"6"};
\endxy
\]
\item[\rm (2)] Dually, those $V$ appearing in $\E$-triangles
\[C\rightarrow V\rightarrow U\dashrightarrow\quad \emph{where}~ U\in\mathcal{U},V\in\mathcal{V}\]
are isomorphic in $\C/\W$.
\end{enumerate}
\end{lemma}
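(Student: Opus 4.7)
The plan is to prove part (1) directly and then note that (2) follows by the strictly dual argument, with Lemma \ref{y6}(2) replacing Lemma \ref{y6}(1). The underlying idea is a standard ``Schanuel'' comparison in the quotient category: construct mutually compatible morphisms $s:U\to U'$ and $t:U'\to U$, then invoke the uniqueness clause of Lemma \ref{y6} to show that their composites are the identity modulo $\W$.

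For the existence of $s$, I would apply the covariant functor $\C(U,-)$ to the second $\E$-triangle $V'\to U'\xrightarrow{u'} C\dashrightarrow$. By Lemma \ref{lem} this yields an exact sequence
$$\C(U,U')\xrightarrow{\C(U,u')}\C(U,C)\xrightarrow{\delta^{\sharp}_{U}}\E(U,V'),$$
and the final term vanishes because $U\in\U$, $V'\in\V$, and $(\U,\V)$ is a cotorsion pair (so $\E(\U,\V)=0$). Hence $u\in\C(U,C)$ lifts to some $s\in\C(U,U')$ with $u'\circ s=u$. Interchanging the roles of the two triangles produces $t\in\C(U',U)$ with $u\circ t=u'$, so $s$ and $t$ are compatible with $u$ and $u'$ as required.

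To see that $\overline{s}$ is an isomorphism in $\C/\W$, I would apply the uniqueness part of Lemma \ref{y6}(1) twice. Taking $f=\id_C$ and both triangles in that lemma equal to the first of the given ones, the two morphisms $t\circ s$ and $\id_U$ from $U$ to $U$ both satisfy $u\circ(t\circ s)=u'\circ s=u=u\circ\id_U$, so the uniqueness forces $\overline{t\circ s}=\overline{\id_U}$. Applying the same lemma with both triangles equal to the second one yields $\overline{s\circ t}=\overline{\id_{U'}}$. Hence $\overline{s}$ and $\overline{t}$ are mutually inverse in $\C/\W$, which is the desired conclusion of (1).

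For part (2), the argument is entirely dual: starting from $\E$-triangles $C\to V\to U\dashrightarrow$ and $C\to V'\to U'\dashrightarrow$, one applies $\C(-,V')$ and the second long exact sequence of Lemma \ref{lem} together with the vanishing $\E(U,V')=0$ to lift the morphism $C\to V'$ to a compatible $V\to V'$, and symmetrically $V'\to V$; Lemma \ref{y6}(2) then forces both composites to be identities modulo $\W$. There is no real obstacle in either part—the only point that needs attention is to match the triangles correctly when invoking Lemma \ref{y6} for the two verifications $\overline{ts}=\overline{\id_U}$ and $\overline{st}=\overline{\id_{U'}}$, so that the relevant uniqueness statement actually applies.
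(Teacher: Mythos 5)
Your argument is correct and is exactly what the paper intends by its one-line proof ``this immediately follows from Lemma \ref{y6}'': you take $f=\id_C$ in Lemma \ref{y6}, use its existence clause (via $\E(\U,\V)=0$) to produce $s$ and $t$, and its uniqueness clause to conclude $\overline{t\circ s}=\overline{\id_U}$ and $\overline{s\circ t}=\overline{\id_{U'}}$. No issues; the approach matches the paper's.
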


\proof This immediately follows from Lemma \ref{y6}.  \qed

\section{Recollements of additive categories}
In this section, we will prove our main results, and need to do some preparations as follows.

\begin{definition}\cite[Definition 2.1]{WL} and \cite[Definition 3.1]{CLY}
Let $\A'\xrightarrow{~F~}\A\xrightarrow{~G~}\A''$
be a sequence of additive functors between
additive categories. We say it is a \emph{localization sequence} if the following conditions
hold:
\begin{itemize}
\item[\rm(L1)] The functor $F$ is fully faithful and has a right adjoint $F_{\del}$.
\item[\rm (L2)] The functor $G$ has a fully faithful right adjoint $G_{\del}$.
\item[(L3)] There exists an equality of additive subcategories
$\Im F=\Ker G$, where $$\Im F=\{A\in\A~|~A\cong F(X)~\textrm{for some}~ X\in\A'\}
~~\textrm{and }~\Ker G=\{A\in\A~|~ G(A)=0~ \textrm{in}~\A'' \}.$$
\end{itemize}
\emph{Colocalization sequence} of additive categories is defined dually.
A sequence of
additive categories $\A'\xrightarrow{~F~}\A\xrightarrow{~G~}\A''$ is called a \emph{recollement} if it is both a localization
sequence and a colocalization sequence.
\end{definition}

From now on, we assume that $(\C, \E, \s)$ is an extriangulated category with enough projectives and enough injectives.

\begin{definition}\cite[Definition 3.1.1]{B}
Let $\A$ and $\B$ be additive categories and $F\colon \A\to\B$ a functor.
For any $B\in\B$, a \emph{reflection} of $B$ along $F$  is a pair $(Q_B,\eta_B)$ of $Q_B\in\A$ and
$\eta_B\in\B(B,F(Q_B))$, satisfying the universality that for any $A\in\A$ and any $b\in\B(B,F(A))$, there exists a unique morphism $a\in\A(Q_B ,A)$ such that $F(a)\circ\eta_B=b$.
$$\xymatrix{
  B \ar[rr]^{\eta_B} \ar[dr]_{b}
                &  &    F(Q_B) \ar[dl]^{F(a)}    \\
                & F(A)                 }$$
A \emph{coreflection} is defined dually.
\end{definition}

\begin{lemma}\label{c4}
Let $((\S,\T ),(\U,\V))$ be a concentric twin cotorsion pair on $\C$ with
$\W= \S\cap\T$. Then the inclusion $\V/\W\hookrightarrow\T/\W$ admits an additive left adjoint
$F_{\lambda}\colon\T/\W\to\V/\W$. Indeed, for any $T\in\T$ and $\E$-triangle
$$T\overset{v_T}{\longrightarrow} V_T\overset{}{\longrightarrow} U_T\overset{}\dashrightarrow$$
with $U_T\in\U$ and $V_T\in\V, \overline{v_T}\colon T\to V_T$ gives a reflection of $T$ along the inclusion
$\V/\W\hookrightarrow\T/\W$, where $F_{\lambda}(T)=V_T$.
\end{lemma}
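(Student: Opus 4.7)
The plan is to verify directly the pointwise reflection property and then invoke standard category theory to assemble the left adjoint $F_\lambda$. Fix $T\in\T$ with the given $\E$-triangle $T\xrightarrow{v_T} V_T\to U_T\dashrightarrow$ (which exists because $(\U,\V)$ is a cotorsion pair on $\C$), and I claim that $\overline{v_T}$ is a reflection of $T$ along the inclusion $F\colon\V/\W\hookrightarrow\T/\W$. Concretely, given any $V\in\V$ and any morphism $b\colon T\to V$ in $\C$, I must produce a lift $a\colon V_T\to V$ with $a\circ v_T=b$ and prove that the class $\overline{a}\in(\C/\W)(V_T,V)$ is uniquely determined.

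Existence is immediate from Lemma~\ref{lem}: applying $\C(-,V)$ to the $\E$-triangle yields the exact sequence
$$\C(V_T,V)\xrightarrow{\C(v_T,V)}\C(T,V)\longrightarrow\E(U_T,V),$$
and $\E(U_T,V)=0$ since $U_T\in\U$, $V\in\V$ and $(\U,\V)$ is a cotorsion pair, so $b$ lifts on the nose.

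For uniqueness modulo $\W$, I would argue as follows. Suppose $a_1,a_2\colon V_T\to V$ both satisfy $\overline{a_i}\circ\overline{v_T}=\overline{b}$; then $(a_1-a_2)\circ v_T$ factors through some $W\in\W$ via morphisms $T\xrightarrow{w}W\xrightarrow{w'}V$. Because $W\in\V$, the existence step applied to $w$ produces $\tilde a\colon V_T\to W$ with $\tilde a\circ v_T=w$, so $(a_1-a_2-w'\tilde a)\circ v_T=0$. The same exact sequence then factors $a_1-a_2-w'\tilde a=h\circ p$ where $p\colon V_T\to U_T$ is the second morphism of the $\E$-triangle and $h\colon U_T\to V$. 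Since $U_T\in\U$ and $V\in\V$, Lemma~\ref{y5} applied to $(\U,\V)$ forces $\overline{h}=0$ in $\C/\W$, and clearly $\overline{w'\tilde a}=0$ because $w'\tilde a$ factors through $W\in\W$. Hence $\overline{a_1}=\overline{a_2}$, as required.

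Once pointwise reflections are established, promoting $T\mapsto V_T$ to a functor $F_\lambda\colon\T/\W\to\V/\W$ is routine: for a morphism $f\colon T_1\to T_2$ the universal property yields a unique $\overline{F_\lambda(f)}\colon V_{T_1}\to V_{T_2}$ with $F_\lambda(f)\circ v_{T_1}=v_{T_2}\circ f$ modulo $\W$ (this is essentially Lemma~\ref{y6} applied to the cotorsion pair $(\U,\V)$), and functoriality together with naturality of the adjunction $F_\lambda\dashv F$ follow formally from the uniqueness. The main obstacle of the whole argument is the uniqueness step, which crucially exploits the equality $\U\cap\V=\W$ supplied by the concentric twin cotorsion pair hypothesis; without it, one could only factor $h$ through $\U\cap\V$, not through the prescribed core $\W=\S\cap\T$.
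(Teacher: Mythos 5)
Your proposal is correct and follows essentially the same route as the paper: existence of the lift from the exact sequence $\C(V_T,V)\to\C(T,V)\to\E(U_T,V)=0$, and uniqueness by first absorbing the factorization through $W\in\W\subseteq\V$ and then factoring the remaining difference through $U_T$, where Lemma~\ref{y5} (via concentricity, $\W=\U\cap\V$) kills it in $\C/\W$. The concluding assembly of the pointwise reflections into the left adjoint $F_\lambda$ matches the paper's (equally brief) treatment.
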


\proof  It is enough to prove that for any $T\in\T, V\in\V$ and $f\in\C(T,V )$, there exists
a morphism $g\in\C(V_T,V)$ such that $f=g\circ v_T$, uniquely in $(\C/\W)(V_T,V)$.

Applying the functor $\C(-,V)$ to the $\E$-triangle $T\overset{v_T}{\longrightarrow} V_T\overset{}{\longrightarrow} U_T\overset{}\dashrightarrow$, we have the following exact sequence:
$$\C(V_T,V)\xrightarrow{\C(v_T,V)}\C(T,V)\xrightarrow{}\E(U_T,V)=0.$$
Thus there exists a morphism $g\in\C(V_T,V)$ such that $f=g\circ v_T$ and then $\overline{f}=\overline{g}\circ \overline{v_T}$.

To prove uniqueness now, suppose that there exists a morphism $g\in\C(V_T,V)$ such that
$\overline{f} =\overline{g'}\circ \overline{v_T}$. Then $(\overline{g}-\overline{g'})\circ v_T=0$, that is, $(g-g')\circ v_T$ factors through some $W_0\in\W$.
Let $(g-g')\circ v_T=b\circ a$ with $a\colon T\to W_0$ and $b\colon W_0\to V$.

Applying the functor $\C(-,W_0)$ to the $\E$-triangle $T\overset{v_T}{\longrightarrow} V_T\overset{}{\longrightarrow} U_T\overset{}\dashrightarrow$, we have the following exact sequence:
$$\C(V_T,W_0)\xrightarrow{\C(v_T,W_0)}\C(T,W_0)\xrightarrow{}\E(U_T,W_0)=0.$$
Thus there exists a morphism $c\in\C(V_T,W_0)$ such that $a=c\circ v_T$ and then
$(g-g'-b\circ c)\circ v_T=0$.
By Lemma \ref{lem},  there exists a morphism $d\in\C(U_T,V)$ such that
$(g-g')-b\circ c = d\circ e$. It follows that $\overline{g}-\overline{g'}=\overline{d}\circ \overline{e}$.
By Lemma \ref{y5}, we have $(\C/\W)(U_T,V)=0$ implies $\overline{g}=\overline{g'}$.
Therefore, $v_T\colon T\to V_T$ gives a reflection of $T$ along the inclusion
$\V/\W\hookrightarrow\T/\W$. Namely, $F_{\lambda}\colon \T/\W\to \V/\W$ is left adjoint to the inclusion $\V/\W\to\T/\W$.  \qed

\begin{lemma}\label{c5}
Let $((\S,\T ),(\U,\V))$ be a concentric twin cotorsion pair on $\C$ with
$\W= \S\cap\T$. If $\Sigma\V\subseteq\V$, then there exists the following colocalization sequence of
additive categories:
$$\xymatrixcolsep{5pc}\xymatrix{
&\V/\W\ar[r]^{F}
&\T/\W\ar@/_1.5pc/[l]_{F_{\lambda}}\ar[r]^{G}
&(\T\cap\U)/\W\ar@/_1.5pc/[l]_{G_{\lambda}}},$$
where $F$ and $G_\lambda$ are the full embeddings.
\end{lemma}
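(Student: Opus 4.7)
The plan is to construct $G$ using the cotorsion pair $(\U,\V)$ and take $G_\lambda$ to be the obvious inclusion. For each $T\in\T$, I would fix an $\E$-triangle $V_T\to U_T\xrightarrow{u_T}T\dashrightarrow$ with $U_T\in\U$ and $V_T\in\V$; since the twin cotorsion pair condition gives $\V\subseteq\T$ and $\T$ is extension-closed, $U_T$ lies in $\T\cap\U$. Set $G(T):=U_T$, with the action on morphisms coming from Lemma \ref{y6} and well-definedness in the quotient from Lemmas \ref{y6} and \ref{y7}. The functor $G_\lambda$ is the full embedding $(\T\cap\U)/\W\hookrightarrow\T/\W$. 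Since $F$ is already the full embedding $\V/\W\hookrightarrow\T/\W$ and $F_\lambda$ is supplied by Lemma \ref{c4}, the remaining tasks are to verify that $G_\lambda$ is left adjoint to $G$ and that $\Im F=\Ker G$.

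For the adjunction, I would show that $u_T$ exhibits $U_T$ as a coreflection of $T$ along $G_\lambda$. Applying $\C(X,-)$ to the defining triangle and using Lemma \ref{lem} yields the exact sequence
$$\C(X,V_T)\to\C(X,U_T)\xrightarrow{u_T\circ-}\C(X,T)\to\E(X,V_T),$$
and when $X\in\T\cap\U$ the right term vanishes because $V_T\in\V$; this provides the required lifts. Uniqueness modulo $\W$ follows because any $g\colon X\to U_T$ with $u_T\circ g=0$ factors through $V_T$, and Lemma \ref{y5} tells us any such morphism $\U\to\V$ is zero in the quotient.

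The main obstacle is the kernel condition $\Im F=\Ker G$. The inclusion $\Im F\subseteq\Ker G$ is immediate: if $T\in\V$, then in the triangle $V_T\to U_T\to T$ both $V_T$ and $T$ lie in $\V$, so by extension-closure $U_T\in\V\cap\U=\W$ and $G(T)\cong 0$. The reverse inclusion is where the hypothesis $\Sigma\V\subseteq\V$ enters. Suppose $U_T\in\W$ and fix any $U'\in\U$; I want $\E(U',T)=0$, so that $T\in\V$. The long exact sequence in higher extensions (Lemma \ref{lem1}) applied to the triangle $V_T\to U_T\to T$ contains
$$\E(U',U_T)\to\E(U',T)\to\E^2(U',V_T).$$
The left term vanishes since $U_T\in\W\subseteq\V$, and $\E^2(U',V_T)=\E(U',\Sigma V_T)$, which is zero because $\Sigma V_T\in\V$ by assumption. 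Hence $\E(U',T)=0$, so $T\in\V$, finishing the proof.
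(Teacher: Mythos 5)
Your proposal is correct and follows essentially the same route as the paper: the same construction of $G(T)=U_T$ from an $\E$-triangle $V_T\to U_T\to T$, the same use of extension-closure to get $\Im F\subseteq\Ker G$, and the same application of $\E(\U,U_T)\to\E(\U,T)\to\E(\U,\Sigma V_T)=\E^2(\U,V_T)$ for the reverse inclusion (the paper merely cites an external reference for the adjunction $(G_\lambda,G)$ that you verify directly). One small point to tighten: in the uniqueness part of the coreflection, you must handle $g$ with $\overline{u_T\circ g}=0$ (i.e.\ $u_T\circ g$ factoring through $\W$), not just $u_T\circ g=0$; the standard fix is exactly the argument in Lemma \ref{c4}.
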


\proof
Assume that $F\colon \V/\W\to \T /\W$ is the full embedding.
We define $G\colon\T/\W\to(\T\cap\U)/\W$ in the following way.
For any $T\in\T, G(T)=Z_T$ appearing in an $\E$-triangle
$V_T\overset{}{\longrightarrow} Z_T\overset{}{\longrightarrow} T\overset{}\dashrightarrow$
 with $Z_T\in \T\cap\U$ and $V_T\in\V$. By
\cite[Definition 3.13]{ZW}, we know that $G$ is an additive functor and a right adjoint of the inclusion $(\T\cap\U)/\W\to\T/\W$, denoted by $G_{\lambda}$. We claim that
 $\Im F = \Ker G$. It is easy to see that $\Im F = \V$ since $F$ is the
full embedding. For any $V\in\V$, there exists an $\E$-triangle
 $$V_0\overset{}{\longrightarrow} W_0\overset{}{\longrightarrow} V\overset{}\dashrightarrow$$
  with $W_0\in\W$ and $V_0\in\V$. Hence, $G(V)=W_0$ in
$(\T\cap\U)/\W$. So we know that $V\in\Ker G$ by Remark \ref{y1}. Then $\Im F\subseteq\Ker G$. On the other hand,
for any $T\in\Ker G$, there exists an $\E$-triangle
$$V_T\overset{}{\longrightarrow} Z_T\overset{}{\longrightarrow} T\overset{}\dashrightarrow$$
with $Z_T\in\W$ and $V_T\in\V$.
Since $\Sigma\V\subseteq\V$, we have $\E(U,\Sigma V_T)=0$.
Applying the functor $\C(\U,-)$ to the $\E$-triangle $T\overset{v_T}{\longrightarrow} V_T\overset{}{\longrightarrow} U_T\overset{}\dashrightarrow$, by Lemma \ref{lem1}, we obtain the following exact sequence:
$$\E(\U,Z_T)=0\xrightarrow{~~}\E(\U,T)\xrightarrow{~~}\E(\U,\Sigma V_T)=0.$$
It follows that $\E(\U,T)=0$ and then $T\in\V$.
Thus $\Im F = \Ker G$. Let $F_{\lambda}$ be as in Lemma \ref{c4}.
Then $(F_{\lambda},F)$
is an adjoint pair. This completes the proof.  \qed

\begin{lemma}\label{c6}
Let $(\U,\V)$ be a cotorsion pair and $((\S,\T ),(\X,\Y))$ a concentric twin cotorsion pair on $\C$ with $\W= \S\cap\T$. If $\T\cap\X=\V$,  then the inclusion $\V/\W\to\T/\W$
admits an additive right adjoint functor $F_{\del}\colon\T/\W\to\V/\W$. Indeed, for any
$T\in\T$
and $\E$-triangle
$$Y_T\overset{}{\longrightarrow} V_T\overset{x_T}{\longrightarrow} T\overset{}\dashrightarrow$$
with $Y_T\in\Y$ and $V_T\in\V, \overline{x_T}\colon V_T\to T$ gives a coreflection of $T$ along the inclusion
$\V/\W\hookrightarrow\T /\W$, where $F_{\del}(T)=V_T$.
\end{lemma}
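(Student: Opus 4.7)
The plan is to dualize the strategy of Lemma \ref{c4}, now working with the second cotorsion pair $(\X,\Y)$ of the concentric twin pair and exploiting the hypothesis $\T\cap\X=\V$ to force the middle term of the resolution to lie in $\V$.

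First, I would produce the required $\E$-triangle. Since $(\X,\Y)$ is a cotorsion pair, any $T\in\T$ admits an $\E$-triangle $Y_T\to X_T\xrightarrow{x_T}T\dashrightarrow$ with $X_T\in\X$ and $Y_T\in\Y$. The twin cotorsion pair condition $\E(\S,\Y)=0$ gives $\Y\subseteq\T$, and because $\T$ is the right-hand side of the cotorsion pair $(\S,\T)$ it is closed under extensions; hence $X_T\in\T$, so $X_T\in\T\cap\X=\V$. Setting $V_T:=X_T$ yields the desired $\E$-triangle.

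Next, for the coreflection property, given $V\in\V$ and $f\in\C(V,T)$, existence of a lift is routine: applying $\C(V,-)$ to the $\E$-triangle and using Lemma \ref{lem} produces the exact sequence $\C(V,V_T)\to\C(V,T)\to\E(V,Y_T)$, and since $V\in\V\subseteq\X$ and $Y_T\in\Y$ we have $\E(V,Y_T)=0$, so $f=x_T\circ g$ for some $g$. Uniqueness modulo $\W$ is the main technical step. Suppose $x_T\circ(g-g')=b\circ a$ factors through $W_0\in\W$ via $a\colon V\to W_0$ and $b\colon W_0\to T$. The key observation is $W_0\in\W=\X\cap\Y\subseteq\T\cap\X=\V$ (using $\Y\subseteq\T$), so by the existence argument applied to $b$ we obtain $c\colon W_0\to V_T$ with $x_T\circ c=b$. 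Then $x_T\circ(g-g'-c\circ a)=0$, and exactness of $\C(V,Y_T)\xrightarrow{\C(V,y)}\C(V,V_T)\xrightarrow{\C(V,x_T)}\C(V,T)$ (where $y\colon Y_T\to V_T$ is the first morphism of the $\E$-triangle) yields $h\colon V\to Y_T$ with $g-g'-c\circ a=y\circ h$. Now Lemma \ref{y5} applied to the cotorsion pair $(\X,\Y)$ gives $(\C/\W)(\X/\W,\Y/\W)=0$, and since $V\in\X$ and $Y_T\in\Y$ this forces $h$ to factor through $\W$. Combining, $\overline{g}=\overline{g'}$ in $\C/\W$.

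Finally, functoriality of $T\mapsto V_T$ on $\T/\W$ and the adjunction exhibiting $F_\del$ as right adjoint to the inclusion follow from the universal property by the same standard argument used in Lemma \ref{y6} (one lifts morphisms $T\to T'$ through $x_T, x_{T'}$ and invokes uniqueness mod $\W$ for well-definedness and functoriality). The main obstacle is the uniqueness step: one genuinely needs both pieces of the hypothesis, namely $\T\cap\X=\V$ to place the auxiliary object $W_0$ in $\V$ (so that $b$ can be lifted through $x_T$), and the concentricity $\W=\X\cap\Y$ to invoke Lemma \ref{y5} on the pair $(\X,\Y)$ to kill the residual morphism to $Y_T$.
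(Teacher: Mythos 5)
Your proposal is correct and follows exactly the route the paper takes: the same construction of the $\E$-triangle with $V_T\in\X\cap\T=\V$ via extension-closure of $\T$ and $\Y\subseteq\T$, followed by the dual of the existence-and-uniqueness argument of Lemma \ref{c4} (which the paper leaves implicit with ``the remaining proof is similar''). Your spelled-out uniqueness step, using $\W=\X\cap\Y$ and Lemma \ref{y5} for the pair $(\X,\Y)$, is precisely the intended dualization.
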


\proof For any $T\in\T$, there exists an $\E$-triangle
$$Y_T\overset{}{\longrightarrow} V_T\overset{x_T}{\longrightarrow} T\overset{}\dashrightarrow$$
with $Y_T\in\Y$ and $V_T\in\X$. Since $\Y\subseteq\T$ and $\T$ is closed under extensions, we have
$V_T\in\T$.
Thus $V_T\in\X\cap\T = \V$. The remaining proof is similar to Lemma \ref{c4}.  \qed

\begin{lemma}\label{c7}
Let $((\S,\T ),(\X,\Y))$ a concentric twin cotorsion pair on $\C$ with $\W= \S\cap\T$.
Then the inclusion $\Y/\W\hookrightarrow\T/\W$ admits an additive left adjoint $G'\colon
\T/\W\to\Y/\W$. Indeed, for any $T\in\T$ and $\E$-triangle
$$T\overset{y_T}{\longrightarrow} Y_T\overset{}{\longrightarrow}X_T\overset{}\dashrightarrow$$
with $Y_T\in\Y$ and $X_T\in\X, \overline{y_{T}}\colon T\to Y_T$ gives a reflection of $T$ along the inclusion
$\Y/\W\hookrightarrow \T /\W$, where $G'(T) = Y_T$.
\end{lemma}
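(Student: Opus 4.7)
The plan is to argue exactly as in Lemma \ref{c4}, with the cotorsion pair $(\U,\V)$ there replaced by $(\X,\Y)$ and the target subcategory $\V/\W$ replaced by $\Y/\W$. Concretely, it suffices to prove that for any $T\in\T$, any $Y\in\Y$, and any $f\in\C(T,Y)$, there exists $g\in\C(Y_T,Y)$ with $f=g\circ y_T$, and that $\overline{g}$ is uniquely determined in $(\C/\W)(Y_T,Y)$. Additivity of $G'$ and the left-adjoint property then follow formally, just as in the earlier lemmas.

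For existence, I would apply the contravariant functor $\C(-,Y)$ to the $\E$-triangle $T\xrightarrow{y_T}Y_T\xrightarrow{~e~}X_T\dashrightarrow$ and invoke Lemma \ref{lem}. Since $X_T\in\X$, $Y\in\Y$, and $(\X,\Y)$ is a cotorsion pair, we have $\E(X_T,Y)=0$, so the map $\C(y_T,Y)$ is surjective and the required lift $g$ of $f$ exists.

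For uniqueness, suppose $g,g'\in\C(Y_T,Y)$ both satisfy $\overline{g}\circ\overline{y_T}=\overline{g'}\circ\overline{y_T}$ in $\C/\W$. Then $(g-g')\circ y_T=b\circ a$ for some $a\colon T\to W_0$ and $b\colon W_0\to Y$ with $W_0\in\W$. Because $\W=\X\cap\Y\subseteq\Y$, we have $\E(X_T,W_0)=0$, so Lemma \ref{lem} applied to $\C(-,W_0)$ lifts $a$ to some $c\colon Y_T\to W_0$ with $a=c\circ y_T$; hence $(g-g'-b\circ c)\circ y_T=0$. A second application of Lemma \ref{lem}, this time to $\C(-,Y)$, then produces $d\colon X_T\to Y$ with $g-g'=b\circ c+d\circ e$. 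The first summand factors through $W_0\in\W$, and the second is a morphism from $X_T\in\X$ to $Y\in\Y$, which vanishes in $\C/\W$ by Lemma \ref{y5} applied to the cotorsion pair $(\X,\Y)$. Therefore $\overline{g}=\overline{g'}$.

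I do not anticipate a real obstacle, since every ingredient (the long exact $\E$-sequence, the vanishing of $\E(X_T,-)$ on $\Y$, and Lemma \ref{y5}) is already available in exactly the form required. The only care needed is to exploit the identification $\W=\X\cap\Y$ twice---once to guarantee $\E(X_T,W_0)=0$, and once to invoke Lemma \ref{y5} for the pair $(\X,\Y)$ rather than for $(\U,\V)$.
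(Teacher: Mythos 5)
Your proposal is correct and is exactly the argument the paper intends: the paper's own proof of Lemma \ref{c7} is literally ``similar to Lemma \ref{c4},'' and you have carried out that adaptation faithfully, using $\E(X_T,Y)=0$ for existence, $\W=\X\cap\Y$ to lift the factorization through $W_0$, and Lemma \ref{y5} for the pair $(\X,\Y)$ (valid since concentricity gives $\X\cap\Y=\S\cap\T=\W$) for uniqueness.
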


\proof  The proof is similar to Lemma \ref{c4}.  \qed

\begin{lemma}\label{c8}
Let $(\U,\V)$ be a cotorsion pair and $((\S,\T),(\X,\Y))$ a concentric twin
cotorsion pair on $\C$ with $\W=\S\cap\T$. If $\T\cap\X =\V$ and $\Sigma\Y\subseteq\Y$, then there exists the following localization sequence of additive categories:
$$\xymatrixcolsep{5pc}\xymatrix{
&\V/\W\ar[r]^{F}
&\T/\W \ar@/^1.5pc/[l]^{F_{\del}} \ar[r]^{G'}
&\Y/\W\ar@/^1.5pc/[l]^{G'_{\del}}},$$
where $F$ and $G'_{\del}$ are the full embeddings.
\end{lemma}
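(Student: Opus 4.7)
The plan is to verify the three axioms (L1)--(L3) of a localization sequence, most of which is already supplied by the preceding lemmas, so that the only real work is the identification of image and kernel. For (L1), the functor $F$ is the inclusion $\V/\W\hookrightarrow\T/\W$, hence fully faithful, and Lemma \ref{c6} already produces an additive right adjoint $F_{\del}$ using the standing hypothesis $\T\cap\X=\V$. For (L2), Lemma \ref{c7} exhibits $G'$ as a left adjoint to the inclusion $\Y/\W\hookrightarrow\T/\W$; taking $G'_{\del}$ to be this inclusion gives a fully faithful right adjoint. So both (L1) and (L2) are essentially free, and the substantive content of the argument is the equality $\Im F=\Ker G'$.

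For $\Im F\subseteq\Ker G'$, I will pick $V\in\V=\T\cap\X$ and apply Lemma \ref{c7} to produce the $\E$-triangle $V\to Y^V\to X^V\dashrightarrow$ realizing $G'(V)=Y^V$ in $\Y/\W$. Since $V,X^V\in\X$ and $\X$ is closed under extensions, the middle term $Y^V$ lies in $\X\cap\Y=\W$, so $G'(V)=0$ in $\Y/\W$ by Remark \ref{y1}.

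For the reverse inclusion $\Ker G'\subseteq\Im F$, I will take $T\in\T$ with $G'(T)=Y_T\in\W$ and apply the contravariant long exact sequence of Lemma \ref{lem1} to the defining $\E$-triangle $T\to Y_T\to X_T\dashrightarrow$ against an arbitrary $Y\in\Y$. This yields a fragment
$$\E(Y_T,Y)\longrightarrow\E(T,Y)\longrightarrow\E^2(X_T,Y).$$
The first term vanishes because $Y_T\in\W\subseteq\X$ and $Y\in\Y$; the third equals $\E(X_T,\Sigma Y)$, which vanishes by the hypothesis $\Sigma\Y\subseteq\Y$ together with the cotorsion pair $(\X,\Y)$. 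Hence $\E(T,Y)=0$ for all $Y\in\Y$, which forces $T\in\X$, and combined with $T\in\T$ gives $T\in\T\cap\X=\V$, i.e.\ $T\in\Im F$.

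The only nontrivial step is this reverse inclusion, where the assumption $\Sigma\Y\subseteq\Y$ is used essentially to annihilate an $\E^2$-obstruction through the identification $\E^2(-,-)=\E(-,\Sigma(-))$. This is exactly parallel to the use of $\Sigma\V\subseteq\V$ in the proof of Lemma \ref{c5}, so I anticipate no technical difficulty beyond bookkeeping of the hypotheses of the concentric twin cotorsion pair.
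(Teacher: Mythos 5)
Your proposal is correct and follows essentially the same route as the paper: (L1) and (L2) from Lemmas \ref{c6} and \ref{c7}, the inclusion $\Im F\subseteq\Ker G'$ via extension-closedness of $\X$, and the reverse inclusion by applying the contravariant long exact sequence to $T\to Y_T\to X_T$ and killing the outer terms $\E(Y_T,\Y)$ and $\E^2(X_T,\Y)\cong\E(X_T,\Sigma\Y)$ using $\W\subseteq\X$ and $\Sigma\Y\subseteq\Y$. No gaps.
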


\proof Suppose that $F\colon\V/\W\to \T /\W$ and $G'_{\del}\colon\Y/\W\to\T/\W$ are the full embeddings. Let $F_{\del}$ and $G'$ be as in Lemma \ref{c6} and Lemma \ref{c7}, respectively. Then
$(F,F_{\del})$ and $(G',G'_{\del})$ are adjoint pairs. Next we claim that $\Im F = \Ker G'$.
Obviously, $\Im F = \V$. For any $V\in\V$, there exists an $\E$-triangle
$$V\overset{}{\longrightarrow} Y_V\overset{}{\longrightarrow}X_V\overset{}\dashrightarrow$$
with $Y_V\in\Y$ and $X_V\in\X$. Since $\T\cap\X = \V$ and
$\X$ is closed under extensions, $Y_V\in\Y\cap\X = \W$. Since $G'(V )=Y_V , V \in\Ker G'$ by
Remark \ref{y1}. Then $\Im F\subseteq\Ker G'$. On the other hand, for any $T\in\Ker G', G'(T) =Y_T$
appearing in an $\E$-triangle
$$T\overset{}{\longrightarrow} Y_T\overset{}{\longrightarrow}X_T\overset{}\dashrightarrow$$
$Y_T\in\Y$ and
$X_T\in\X$. Then $Y_T\in\W$.

Applying the functor $\C(-,\Y)$ to the $\E$-triangle $T\overset{}{\longrightarrow} Y_T\overset{}{\longrightarrow}X_T\overset{}\dashrightarrow$, by Lemma \ref{lem1}, we have the following exact sequence:
$$\E(Y_T,\Y)=0\xrightarrow{~~}\E(T,\Y)\xrightarrow{~~}\E(X_T,\Sigma\Y)=0.$$
It follows that $\E(T,\Y)=0$ and then $T\in\X$.
 Thus $T\in\T\cap\X = \V$, namely, $T\in\Im F$.
So $\Im F = \Ker G'$. This completes the proof.  \qed

\begin{theorem}\label{main1}
Let $(\S,\T), (\U,\V)$ and $(\X,\Y)$ be cotorsion pairs on $\C$ with
$\W=\S\cap\T = \U\cap\V =\X\cap\Y$. If $\X\cap\T = \V, \Y\subseteq\T, \Sigma\V\subseteq\V$ and
 $\Sigma\Y\subseteq\Y$, then there
are two recollements of additive categories as follows:
$$\xymatrixcolsep{5pc}\xymatrix{
&\V/\W\ar[r]^{F}
&\T/\W\ar@/_1.5pc/[l]_{F_{\lambda}} \ar@/^1.5pc/[l]^{F_{\del}} \ar[r]^{G}
&(\T\cap\U)/\W\ar@/^1.5pc/[l]^{G_{\del}}\ar@/_1.5pc/[l]_{G_{\lambda}}},$$
$$\xymatrixcolsep{5pc}\xymatrix{
&\V/\W\ar[r]^{F}
&\T/\W\ar@/_1.5pc/[l]_{F_{\lambda}} \ar@/^1.5pc/[l]^{F_{\del}} \ar[r]^{G'}
&\Y/\W\ar@/^1.5pc/[l]^{G'_{\del}}\ar@/_1.5pc/[l]_{G'_{\lambda}}},$$
where $F, G_{\lambda}$ and $G_{\del}$ are the full embeddings.
\end{theorem}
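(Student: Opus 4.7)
The plan is to bootstrap Lemmas \ref{c5} and \ref{c8} into full recollements by supplying the complementary adjoints; the equalities $\Im F=\Ker G$ and $\Im F=\Ker G'$ are already established inside those lemmas and do not need to be revisited.

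The hypotheses make both $((\S,\T),(\U,\V))$ and $((\S,\T),(\X,\Y))$ into concentric twin cotorsion pairs, so all of Lemmas \ref{c4}--\ref{c8} apply simultaneously. Lemma \ref{c5} (with $\Sigma\V\subseteq\V$) delivers the colocalization half of the first recollement: the full embedding $F$, its left adjoint $F_\lambda$, the functor $G$, its fully faithful left adjoint $G_\lambda$ (the canonical inclusion), and $\Im F=\Ker G$. Dually, Lemma \ref{c8} (with $\X\cap\T=\V$ and $\Sigma\Y\subseteq\Y$) delivers the localization half of the second recollement, together with $\Im F=\Ker G'$. Lemmas \ref{c4} and \ref{c6} supply the adjoints $F_\lambda$ and $F_\del$ of the common embedding $F$ appearing in both diagrams.

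The remaining adjoints to be constructed are a fully faithful right adjoint $G_\del$ of $G$ in the first recollement and a fully faithful left adjoint $G'_\lambda$ of $G'$ in the second. My intention is to imitate the proofs of Lemmas \ref{c5} and \ref{c7}, but with the roles of the cotorsion pairs $(\U,\V)$ and $(\X,\Y)$ interchanged. Concretely, for $G_\del$ I attach to each $X\in\T\cap\U$ an $\E$-triangle coming from $(\X,\Y)$ whose intermediate object is forced into $\T$ via $\Y\subseteq\T$ and the extension-closure of $\T$, and extract $G_\del(X)$ from this triangle. The adjunction bijection $\Hom_{\T/\W}(T,G_\del(X))\cong\Hom_{(\T\cap\U)/\W}(G(T),X)$ is verified by applying the appropriate Hom-functor to the triangle and invoking the long exact sequences of Lemmas \ref{lem}--\ref{lem1}, the orthogonalities $\E(\U,\V)=\E(\X,\Y)=0$, and the hom-vanishing in the quotient supplied by Lemma \ref{y5}; well-definedness on $\T/\W$ and functoriality follow from Lemmas \ref{y6} and \ref{y7}. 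The construction of $G'_\lambda$ is strictly dual, using $(\U,\V)$ together with $\Sigma\V\subseteq\V$.

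The main obstacle is pinning down the correct $\E$-triangle for the construction of $G_\del$ (and dually for $G'_\lambda$) and verifying the adjunction bijection. Once this is done, fullness and faithfulness of the new embeddings reduce via Lemma \ref{y5} to the concentricity of the two twin cotorsion pairs, and the remaining recollement axioms are complete.
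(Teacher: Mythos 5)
Your constructions agree with the paper's: the missing right adjoint $G_{\del}$ is obtained by sending $M\in\T\cap\U$ to the object $Y_M$ of an $(\X,\Y)$-approximation triangle $M\to Y_M\to X_M$ (with $Y_M\in\Y\subseteq\T$), and the missing left adjoint $G'_{\lambda}$ by sending $Y\in\Y$ to $U_Y$ from a $(\U,\V)$-triangle $V_Y\to U_Y\to Y$ (with $U_Y\in\T\cap\U$ by extension-closure of $\T$). Where you diverge is in how the adjunction is certified, and the step you yourself flag as ``the main obstacle'' is exactly the point the paper treats differently. The paper does not verify the bijection $\Hom_{\T/\W}(T,G_{\del}M)\cong\Hom_{(\T\cap\U)/\W}(G(T),M)$ from scratch: it packages the two assignments into functors $J\colon(\T\cap\U)/\W\to\Y/\W$ and $K\colon\Y/\W\to(\T\cap\U)/\W$ (which are just the restrictions of $G'$ and $G$ to the two outer corners), observes that $G$ and $G'$ are dense because they restrict to the identity on $\T\cap\U$ and on $\Y$, and then defines $G_{\del}=G'_{\del}\circ J$ and $G'_{\lambda}=G_{\lambda}\circ K$, so that the required adjunctions and the full faithfulness of $G_{\del}$ are inherited by composing the adjunctions already established in Lemmas \ref{c5} and \ref{c8}.

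Your proposed direct verification is not wrong in spirit, but be aware that it is genuinely more delicate than ``apply the Hom-functor and invoke orthogonality.'' For instance, to produce the unit $T\to Y_{G(T)}$, or to extend a morphism $Z_T\to Y_M$ along the deflation $Z_T\to T$ of the triangle $V_T\to Z_T\to T$, the long exact sequence of Lemma \ref{lem} requires the restriction to $V_T$ to vanish on the nose; but since $V_T\in\V\subseteq\X$ and the target lies in $\Y$, Lemma \ref{y5} only gives vanishing modulo $\W$, and the correction argument used in the uniqueness half of Lemma \ref{c4} then calls for an $\E$-vanishing (of the form $\E(T,W)=0$ with $W\in\W$) that is not available for a general $T\in\T$. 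So if you pursue the direct route you must organize the bijection so that every lifting is performed against one of the actual orthogonalities $\E(\U,\V)=\E(\X,\Y)=0$; the paper's composition-of-adjoints device is precisely the way to avoid this bookkeeping, and I would recommend adopting it.
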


\proof Define $J\colon(\T\cap\U)/\W\to\Y/\W$ in the following way.
For any $M\in\T\cap\U, J(M)=Y_M$ appearing in an $\E$-triangle
$$M\overset{y_M}{\longrightarrow} Y_M\overset{}{\longrightarrow}X_M\overset{}\dashrightarrow$$
with $Y_M\in\Y$ and $X_M\in\X$.
Applying the functor $\C(-,Y_{M'})$ to the above $\E$-triangle, we have the following exact sequence:
$$\C(Y_M,Y_{M'})\xrightarrow{\C(y_M,W_0)}\C(M,Y_{M'})\xrightarrow{}\E(X_M,Y_{M'})=0.$$
Thus for any $f\in(\T\cap\U)/\W)(M,M')$, there exists a morphism $f_M\in\C(Y_M,Y_{M'})$ such that $y_{M'}\circ f=f_M\circ y_M$.
Hence we have the following commutative diagram
$$\xymatrix{M\ar[r]^{y_M}\ar[d]^f &Y_M\ar[r]\ar[d]^{f_M} &X_M\ar@{-->}[r]\ar@{-->}[d]&\\
M'\ar[r]^{y_{M'}} & Y_{M'}\ar[r]& X_{M'}\ar@{-->}[r]&}$$
of $\E$-triangles.
Define $J(f)=f_M$. By Lemma \ref{y6} and Lemma \ref{y7}, $J$ is well defined and is
an additive functor.

Define $K\colon\Y/\W\to (\T\cap\U)/\W$ in the following way. For any
$Y\in\Y, K(Y)= U_Y$ appearing in an $\E$-triangle
$$V_Y\overset{}{\longrightarrow} U_Y\overset{u_Y}{\longrightarrow}Y\overset{}\dashrightarrow$$
with $V_Y\in\V$ and $U_Y\in\U$. Since $\Y\subseteq\T, \V\subseteq\T$ and $\T$ is closed under extensions,
$U_Y\in\T\cap\U$. For any $g\in(\Y/\W)(Y,Y')$, since $\E(U_Y ,V_{Y'}) = 0$, there exists
a morphism $g_Y\in\C(U_Y,U_{Y'})$ such that $u_{Y'}\circ g_Y = g\circ u_Y$.
Hence we have the following commutative diagram
$$\xymatrix{V_Y\ar[r]\ar@{-->}[d]&U_Y\ar[r]^{u_Y}\ar[d]^{g_Y} &Y\ar@{-->}[r]\ar[d]^{g}&\\
V_{Y'}\ar[r] & U_{Y'}\ar[r]^{u_{Y'}}& Y'\ar@{-->}[r]&}$$
of $\E$-triangles.
Define $K(g)=g_Y$. By Lemma \ref{y6} and Lemma \ref{y7}, $K$ is well defined and is
an additive functor. Now, we have the following diagram:
$$\xymatrix{\mathcal{V}/\mathcal{W}\ar@<-0.8ex>[rr]_-{F}\ar@<0.5ex>[dd]^{id}&&\ar@<-0.8ex>[ll]_-{F_{\lambda}}\mathcal{T}/\mathcal{W}\ar@<-0.8ex>[rr]_-{G}\ar@<0.5ex>[dd]^{id}&&\ar@<-0.8ex>[ll]_-{G_{\lambda}}(\mathcal{T}\bigcap\mathcal{U})/\mathcal{W} \ar@<-0.5ex>[dd]_{J} \\
&&&&\\
\ar@<0.5ex>[uu]^{id}\mathcal{V}/\mathcal{W}\ar@<0.8ex>[rr]^-{F}&&\ar@<0.8ex>[ll]^-{F_{\rho}}\ar@<0.5ex>[uu]^{id}\mathcal{T}/\mathcal{W}\ar@<0.8ex>[rr]^-{G'}&& \ar@<0.8ex>[ll]^-{G'_{\rho}}\mathcal{Y}/\mathcal{W}\ar@<-0.5ex>[uu]_{K}            }$$
For any $Y\in\Y$, since $0\to Y\to Y\dashrightarrow$ is an $\E$-triangle, we have $G'(Y) = Y$. So
$G'$ is dense. Then there exists a natural isomorphism from the composite functor
$\T/\W\xrightarrow{G'}\Y/\W\xrightarrow{K}(\T\cap\U)/\W$ to the functor
$\Y/\W\xrightarrow{K}(\T\cap\U)/\W$. Consequently, we know that the composite functor $\Y/\W\xrightarrow{K}
(\T\cap\U)/\W\xrightarrow{G_{\lambda}}\T /\W$ is indeed left
adjoint to $\T/\W\xrightarrow{G'}\Y/\W$. Put $G'_{\lambda}=G_{\lambda}K$. Then we have a recollement of additive
categories
$$\xymatrixcolsep{5pc}\xymatrix{
&\V/\W\ar[r]^{F}
&\T/\W\ar@/_1.5pc/[l]_{F_{\lambda}} \ar@/^1.5pc/[l]^{F_{\del}} \ar[r]^{G'}
&\Y/\W\ar@/^1.5pc/[l]^{G'_{\del}}\ar@/_1.5pc/[l]_{G'_{\lambda}}}.$$
Similarly, for any $M\in\T\cap\U, G(M)=M$ since $0 \to W\to W\dashrightarrow$ is an
$\E$-triangle. So $G$ is dense. Then there exists a natural isomorphism from the
composite functor $\T/\W\xrightarrow{G}(\T\cap\U)/\W
\xrightarrow{J}\Y/\W$ to the functor $(\T\cap\U)/\W\xrightarrow{J}\Y/\W$.
Thus $(\T\cap\U)/\W\xrightarrow{J}\Y/\W\xrightarrow{G'_{\del}}
\T/\W$ is indeed right adjoint to $\T/\W\xrightarrow{G}(\T\cap\U)/\W$. Put
$G_{\del}=G'_{\del}J$. Then we obtain the desired recollement of additive categories
$$\xymatrixcolsep{5pc}\xymatrix{
&\V/\W\ar[r]^{F}
&\T/\W\ar@/_1.5pc/[l]_{F_{\lambda}} \ar@/^1.5pc/[l]^{F_{\del}} \ar[r]^{G}
&(\T\cap\U)/\W\ar@/^1.5pc/[l]^{G_{\del}}\ar@/_1.5pc/[l]_{G_{\lambda}}}.$$  \qed

\medskip
We give the dual result of Theorem \ref{main1}, but omit its proof.
\begin{theorem}\label{main2}
Let $(\S,\T), (\U,\V)$ and $(\X,\Y)$ be cotorsion pairs on $\C$ with
$\W=\S\cap\T = \U\cap\V =\X\cap\Y$. If $\Y\cap\S = \U, \X\subseteq\S, \Sigma\V\subseteq\V$ and
 $\Sigma\Y\subseteq\Y$, then there
are two recollements of additive categories as follows:
$$\xymatrixcolsep{5pc}\xymatrix{
&\U/\W\ar[r]^{I}
&\S/\W\ar@/_1.5pc/[l]_{I_{\lambda}} \ar@/^1.5pc/[l]^{I_{\del}} \ar[r]^{J}
&(\S\cap\V)/\W\ar@/^1.5pc/[l]^{J_{\del}}\ar@/_1.5pc/[l]_{J_{\lambda}}},$$
$$\xymatrixcolsep{5pc}\xymatrix{
&\U/\W\ar[r]^{I}
&\S/\W\ar@/_1.5pc/[l]_{I_{\lambda}} \ar@/^1.5pc/[l]^{I_{\del}} \ar[r]^{J'}
&\X/\W\ar@/^1.5pc/[l]^{J'_{\del}}\ar@/_1.5pc/[l]_{J'_{\lambda}}},$$
where $I, J_{\del}$ and $J'_{\lambda}$ are the full embeddings.
\end{theorem}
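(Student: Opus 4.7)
The plan is to obtain Theorem \ref{main2} as a direct consequence of Theorem \ref{main1} via the duality of the extriangulated structure. If $(\C, \E, \s)$ is extriangulated with enough projectives and enough injectives, then the opposite category $(\C\op, \E\op, \s\op)$ is also extriangulated (cf.\ \cite{NP}), with the roles of projectives and injectives swapped, with $\Sigma_{\C\op}$ equal to $\Omega_{\C}$, and with a cotorsion pair $(\A,\B)$ on $\C$ transforming into the cotorsion pair $(\B,\A)$ on $\C\op$ whose common core $\W$ is preserved.

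Relabelling the three cotorsion pairs $(\S,\T),(\U,\V),(\X,\Y)$ of Theorem \ref{main2} on $\C$ as $(\T,\S),(\V,\U),(\Y,\X)$ on $\C\op$, the hypothesis $\Y\cap\S=\U$ becomes precisely the translation of $\X\cap\T=\V$ in Theorem \ref{main1}, and $\X\subseteq\S$ becomes the translation of $\Y\subseteq\T$. The syzygy conditions appearing in Theorem \ref{main2} correspond to the cosyzygy conditions in Theorem \ref{main1} under $\Sigma_{\C\op}=\Omega_{\C}$. Applying Theorem \ref{main1} inside $\C\op$ therefore produces two recollements of additive categories on $\C\op$, and pulling these back to $\C$ reverses all arrows, so that left adjoints become right adjoints, colocalization sequences become localization sequences, and the full embeddings $F,G_{\lambda},G_{\del}$ of Theorem \ref{main1} turn into $I,J_{\del},J'_{\lambda}$ as required.

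Alternatively, one may reproduce the proof of Theorem \ref{main1} \emph{ab initio} with all arrows reversed. The dual of Lemma \ref{c4} asserts that for any $T\in\S$ and any $\E$-triangle $V_T\to U_T\xrightarrow{u_T} T\dashrightarrow$ arising from the cotorsion pair $(\U,\V)$, the map $\overline{u_T}$ gives a coreflection of $T$ along $\U/\W\hookrightarrow\S/\W$; this is obtained by applying Lemma \ref{lem} to $\C(\U,-)$ instead of $\C(-,\V)$, together with $\E(\U,\V)=0$ and Lemma \ref{y5}. The duals of Lemmas \ref{c5}--\ref{c8} are proved in the same symmetric way, and the final assembly of the two recollements mirrors the construction of the functors $J$ and $K$ in the proof of Theorem \ref{main1}, now using the $\E$-triangles supplied by the cotorsion pairs from the opposite side and invoking Lemmas \ref{y6} and \ref{y7} to ensure that these functors are well defined modulo $\W$.

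The only real obstacle is bookkeeping: one must consistently interchange inflations with deflations, the two components of each cotorsion pair, left adjoints with right adjoints, and syzygies with cosyzygies. Once these substitutions are tracked, no new conceptual ingredient beyond the machinery already developed in \cite{NP} and \cite{LN} is required, which is precisely why the authors judge it safe to omit the argument.
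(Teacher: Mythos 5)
Your approach is exactly the paper's: Theorem \ref{main2} is stated there as the dual of Theorem \ref{main1} with the proof omitted, and passing to $\C\op$ (or rerunning the argument with arrows reversed) is the intended justification. One point in your ``bookkeeping'' is not as automatic as you suggest, however. You assert that the ($\Sigma$-)conditions of Theorem \ref{main2} are the translations of those of Theorem \ref{main1} under $\Sigma_{\C\op}=\Omega_{\C}$, but the literal dual of the hypotheses $\Sigma\V\subseteq\V$ and $\Sigma\Y\subseteq\Y$ is $\Omega\U\subseteq\U$ and $\Omega\X\subseteq\X$, whereas Theorem \ref{main2} again assumes $\Sigma\V\subseteq\V$ and $\Sigma\Y\subseteq\Y$. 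To close this gap you need the observation that for a cotorsion pair $(\U,\V)$ one has $\Sigma\V\subseteq\V \iff \E^{2}(\U,\V)=0 \iff \Omega\,\U\subseteq\U$, using the Liu--Nakaoka identification $\E^{2}(U,V)\cong\E(U,\Sigma V)\cong\E(\Omega U,V)$ together with the characterizations $C\in\U\iff\E(C,\V)=0$ and $C\in\V\iff\E(\U,C)=0$; with that one line added, the dualization goes through as you describe. (A cosmetic remark: the dual of $G_{\del}=G'_{\del}J$ is $J_{\lambda}$ rather than $J'_{\lambda}$; the list of full embeddings in the two theorem statements is already slightly inconsistent in the paper itself, so this is not a defect of your argument.)
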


\begin{remark}
When we apply Theorem \ref{main1} to a triangulated category, it is just the Theorem 3.2 in \cite{CLY}.
When we apply Theorem \ref{main2} to a triangulated category, it is just the Theorem 3.9 in \cite{CLY}.
\end{remark}

\section{Applications}
We assume that $R$ is a ring with unit. We denote by Mod$R$ the category of left $R$-modules. Unless otherwise stated, all modules are left modules. In this section, we will give some applications.  Meanwhile, we will see that Proposition 4.3 and Proposition 4.6 in \cite{CLY} can be obtained directly from the cotorsion pairs on the categories of  the complexes  by our main results. \par
Let $\mathcal{A}$ be  an abelian category with enough projective and injective objects.  For a subclass $\mathcal{C}$ of $\mathcal{A}$,  we set
\begin{align*}
  ^{\perp}\mathcal{C} &=\{~M\in \mathcal{A}~|~\text{Ext}^{1}_{R}(M,N)=0~\text{for any}~N\in\mathcal{C}\} \\
\mathcal{C}^{\perp} &=\{~N\in \mathcal{A}~|~\text{Ext}^{1}_{R}(M,N)=0~\text{for any}~M\in\mathcal{C}\}.
\end{align*}\par
A pair $(\mathcal{X}, \mathcal{Y})$ of $\mathcal{A}$ is said to be a \emph{cotorsion pair} if $\mathcal{X}={^{\perp}\mathcal{Y}}$ and $\mathcal{Y}={\mathcal{X}^{\perp}}$. The cotorsion pair is called \emph{complete} if for any $M\in$$\mathcal{A}$, there exist two short exact sequences
 \begin{align*}
   0&\rightarrow Y\rightarrow X\rightarrow M\rightarrow0 \\
   0&\rightarrow M\rightarrow Y'\rightarrow X'\rightarrow0
 \end{align*}
with $X,X'\in \mathcal{X}$ and $Y,Y'\in \mathcal{Y}$. A pair of cotorsion pairs $((\mathcal{S}, \mathcal{T}),(\mathcal{U}, \mathcal{V})$ is said to be a \emph{twin cotorsion pair} if $\mathcal{S}\subseteq \mathcal{U}$, or equivalently, $\mathcal{V}\subseteq \mathcal{T}$. A twin cotorsion pair is called \emph{concentric} if $\mathcal{S}\cap\mathcal{T}=\mathcal{U}\cap\mathcal{V}$.\par
 It is well-known that  the canonical cotorsion pair $(\mathcal{P}, \text{Mod}R)$ is completed by \cite[Theorem 3.2.1]{GT} since it is generated by $R$.

Recall that $R$ is said to be a \emph{Gorenstein ring}  if  it is a left and right noetherian ring with finite injective dimension on either side.
 An $R$-module $N$ is called \emph{Gorenstein projective}  if there is a exact sequence of projective $R$-modules
 $$\textbf{P}:~~~\cdots\rightarrow P_{1}\rightarrow P_{0} \rightarrow P^{0} \rightarrow P^{1}\rightarrow\cdots $$
  with $N=\text{Ker}(P_{0} \rightarrow P^{0} )$ such that $\text{Hom}(\textbf{P},Q)$ is exact for any projective module $Q$.

Let $\mathcal{GP}(R)$  be the full subcategory of Mod$R$ consisting of all the Gorenstein projective  module. We denote by $\mathcal{P}^{<\infty}$ the class of modules admitting finite projective dimension. If $R$ is a Gorenstein ring, then $(\mathcal{GP}(R),\mathcal{P}^{<\infty})$ is a complete cotorsion pair, see \cite[Example 4.1.14]{GT}.\par
\begin{proposition} Let $R$  be a Gorenstein ring. Then the stable category $\underline{\mathcal{GP}(R)}$ is a coreflection subcategory of {\rm$\underline{\text{Mod}R}$}.
\end{proposition}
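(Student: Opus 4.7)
The plan is to apply Lemma \ref{c5} to the abelian (hence extriangulated) category $\text{Mod}R$ with its canonical exact structure, for which the projectives and injectives of the extriangulated structure coincide with the projective and injective modules, and $\Sigma V$ is simply the cokernel of any embedding $V\hookrightarrow I$ into an injective module. Note that $\underline{\text{Mod}R}=\text{Mod}R/\mathcal{P}$ and $\underline{\mathcal{GP}(R)}=\mathcal{GP}(R)/\mathcal{P}$, so the assertion is precisely that the inclusion $\underline{\mathcal{GP}(R)}\hookrightarrow\underline{\text{Mod}R}$ admits a right adjoint.

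The concrete input is the pair of cotorsion pairs $(\mathcal{S},\mathcal{T})=(\mathcal{P},\text{Mod}R)$ (the trivial projective cotorsion pair, which exists on any abelian category with enough projectives) and $(\mathcal{U},\mathcal{V})=(\mathcal{GP}(R),\mathcal{P}^{<\infty})$ (the complete cotorsion pair provided by the Gorenstein hypothesis). I would then verify the hypotheses of Lemma \ref{c5}: first, $\mathcal{S}\subseteq\mathcal{U}$ because every projective module is Gorenstein projective, so $((\mathcal{S},\mathcal{T}),(\mathcal{U},\mathcal{V}))$ is a twin cotorsion pair; second, $\mathcal{S}\cap\mathcal{T}=\mathcal{P}=\mathcal{GP}(R)\cap\mathcal{P}^{<\infty}=\mathcal{U}\cap\mathcal{V}$, where the middle equality uses the classical fact that over a Gorenstein ring a Gorenstein projective module of finite projective dimension is projective, so the twin cotorsion pair is concentric with $\mathcal{W}=\mathcal{P}$; third, $\Sigma\mathcal{V}\subseteq\mathcal{V}$, which is the step to check.

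For the latter, take any $V\in\mathcal{P}^{<\infty}$ and a short exact sequence $0\to V\to I\to \Sigma V\to 0$ with $I$ injective. Because $R$ is Gorenstein, $R$ has finite injective dimension on both sides, so every injective module has finite projective dimension; combining this with $\text{pd}(V)<\infty$ gives $\text{pd}(\Sigma V)\leq\max(\text{pd}(V)+1,\text{pd}(I))<\infty$, so $\Sigma V\in\mathcal{P}^{<\infty}$. This is the only real check in the argument; everything else is formal.

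With these hypotheses established, Lemma \ref{c5} produces the colocalization sequence
$$\xymatrixcolsep{4pc}\xymatrix{
\mathcal{P}^{<\infty}/\mathcal{P}\ar[r]^{F}&\underline{\text{Mod}R}\ar@/_1.5pc/[l]_{F_{\lambda}}\ar[r]^{G}&(\text{Mod}R\cap\mathcal{GP}(R))/\mathcal{P}\ar@/_1.5pc/[l]_{G_{\lambda}},}$$
in which $\mathcal{T}\cap\mathcal{U}=\text{Mod}R\cap\mathcal{GP}(R)=\mathcal{GP}(R)$, so the right-hand term is $\underline{\mathcal{GP}(R)}$, and $G_{\lambda}\colon\underline{\mathcal{GP}(R)}\hookrightarrow\underline{\text{Mod}R}$ is the fully faithful embedding with right adjoint $G$. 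The existence of this right adjoint is exactly the coreflection statement, which completes the proof. \qed
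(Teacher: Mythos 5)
Your proposal is correct and follows exactly the same route as the paper: apply Lemma \ref{c5} to the concentric twin cotorsion pair $((\mathcal{P},\mathrm{Mod}R),(\mathcal{GP}(R),\mathcal{P}^{<\infty}))$ with $\mathcal{W}=\mathcal{P}$ and read off the right adjoint of the inclusion from the resulting colocalization sequence. The only difference is that you spell out the verifications the paper dismisses as ``easy to see'' (concentricity via $\mathcal{GP}(R)\cap\mathcal{P}^{<\infty}=\mathcal{P}$, and $\Sigma\mathcal{P}^{<\infty}\subseteq\mathcal{P}^{<\infty}$ via the finite projective dimension of injectives over a Gorenstein ring), which is a welcome addition rather than a deviation.
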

\begin{proof} It is easy to see that (($\mathcal{P}, \text{Mod}R$),($\mathcal{GP}(R),\mathcal{P}^{<\infty}$)) is a concentric twin cotorsion pair and $\Sigma \mathcal{P}^{<\infty}\subseteq \mathcal{P}^{<\infty}$ since $R$  is a Gorenstein ring.  By Lemma \ref{c5}, we obtain a  colocalization sequence of additive categories
$$\xymatrixcolsep{5pc}\xymatrix{
&\underline{\mathcal{P}^{<\infty}}\ar[r]^{F}
&\underline{\text{Mod}R}\ar@/_1.5pc/[l]_{F_{\lambda}}\ar[r]^{G}
&\underline{\mathcal{GP}(R)}\ar@/_1.5pc/[l]_{G_{\lambda}}}.$$
Thus, the inclusion functor $G_{\lambda}$ admits a right adjoint functor. That is, $\underline{\mathcal{GP}(R)}$ is a coreflection subcategory of {\rm$\underline{\text{Mod}R}$}.
\end{proof}
 We denote the class of projective modules by $\mathcal{P}$, the class of injective modules by $\mathcal{I}$, the the category of a complex of left $R$-modules by $C(\text{Mod}R)$, the class of exact complexes by $\mathcal{E}$.\par
We write a complex $X^{\bullet}\in C(\text{Mod}R)$ as
$$\cdots\longrightarrow X_{n+1}\xrightarrow{d_{n+1}}X_{n}\xrightarrow{d_{n}}X_{n-1}\longrightarrow\cdots.$$
Given $X^{\bullet}\in C(\text{Mod}R)$, the suspension of $X^{\bullet}$, denoted by $S(X^{\bullet})$, is the complex given by $S(X^{\bullet})_{n}=X_{n-1}$ and $d_{S(X^{\bullet})}=-d_{n}$. Inductively, one can define $S^{n+1}(X^{\bullet})=S(S^{n}(X^{\bullet}))$ for all $n\in\mathbb{Z}$. Given an $R$-module $X$, we denote by $\overline{X}$ the complex
$$\cdots\longrightarrow 0\longrightarrow X\overset{1}{\longrightarrow}X\longrightarrow 0\longrightarrow \cdots$$
where the two $X$'s are in the 0th and -1st place.
 \par
 Recall that a complex $P^{\bullet}\in C(\text{Mod}R)$ is said to be \emph{projective }if for any morphism $P^{\bullet}\rightarrow D^{\bullet}$ and any epimorphism $C^{\bullet}\rightarrow D^{\bullet}$, the diagram
$$\xymatrix{
                &         P^{\bullet} \ar[d] \ar@{-->}[dl] \\
  C^{\bullet} \ar[r] & D^{\bullet}             }$$
can be completed to a commutative diagram by a morphism $ P^{\bullet} \rightarrow  C^{\bullet} $. Dually, one can define the injective complex $I^{\bullet}$. It is well-known that each projective complex $P^{\bullet}$ and injective complex $I^{\bullet}$ are exact complexes. Moreover, the components $P_{n}$, $I_{n}$ of projective complex $P^{\bullet}$ and injective complex $I^{\bullet}$ are projective and injective modules in Mod$R$, respectively. However, conversely, it  may not true, see \cite[Example 1.4.5]{EJ}.\par
From \cite[Section 1.4]{EJ}, we know that $C(\text{Mod}R)$ admits enough projective and injective complexes. Then, for any complex $C^{\bullet} \in C(\text{Mod}R)$, there
exist a projective resolution and an injective resolution of $C^{\bullet}$. It means that there exist two exact
sequences of complexes
$$\cdots\longrightarrow{P^{\bullet}}_{n}\longrightarrow\cdots\longrightarrow P^{\bullet}_{1}\longrightarrow P^{\bullet}_{0}\longrightarrow C^{\bullet}\longrightarrow0$$
$$0\longrightarrow C^{\bullet}\longrightarrow I^{\bullet}_{0}\longrightarrow I^{\bullet}_{-1}\longrightarrow \cdots \longrightarrow I^{\bullet}_{-n}\longrightarrow\cdots$$
where each $P^{\bullet}_{n}$, $I^{\bullet}_{-n}$ is  projective and  injective, respectively.\par
Now, we can define the  groups $\text{Ext}^{n}_{C(\text{Mod}R)}(C^{\bullet}, D^{\bullet})$ (or simply, $\text{Ext}^{n}(C^{\bullet}, D^{\bullet})$) for any complexes $C^{\bullet}$ and $D^{\bullet}$. If exact
sequences of complexes
$$\cdots\longrightarrow P^{\bullet}_{n}\longrightarrow\cdots\longrightarrow P^{\bullet}_{1}\longrightarrow P^{\bullet}_{0}\longrightarrow C^{\bullet}\longrightarrow0$$
is the projective resolution of $C^{\bullet}$, then $\text{Ext}^{n}(C^{\bullet}, D^{\bullet})$ is defined to be the  $n$-cohomology group of the complex
$$0\rightarrow \text{Hom}(P^{\bullet}_{0},D^{\bullet})\rightarrow\text{Hom}(P^{\bullet}_{1},D^{\bullet})\rightarrow\cdots.$$
It also can be computed by the injective resolution of $D^{\bullet}$. Especially, the $\xi\in \text{Ext}^{1}(C^{\bullet}, D^{\bullet})$ can be put in bijective correspondence with the equivalence classes of short exact sequences
$$0\longrightarrow D^{\bullet}\longrightarrow E^{\bullet}\longrightarrow C^{\bullet}\longrightarrow0 $$ in $C(\text{Mod}R)$.\par
We denote by $\textbf{K}(\text{Mod}R)$ and $\textbf{D}(R)$ the corresponding homotopy category and derived category of $\text{Mod}R$.
\begin{definition}\cite{G01, G02}
Given a class of $R$-modules $\mathcal{C}$, we define the following classes of chain complexes in $C(\text{Mod}R)$
\begin{enumerate}
  \item[(1)] $dw \mathcal{C}$ is the class of all chain complexes $C^{\bullet}$ with $C_{n}\in\mathcal{C}$ for any $n\in\mathbb{Z}$.
  \item[(2)] $ex \mathcal{C}$ is the class of all exact chain complexes $C^{\bullet}$ with $C_{n}\in\mathcal{C}$ for any $n\in\mathbb{Z}$.
  \item[(3)] $\widetilde{\mathcal{C}}$ is the class of all exact chain complexes $C^{\bullet}$ with cycles $Z_{n}(C^{\bullet})\in\mathcal{C}$.
  \item[(4)] Given any cotorsion pair ($\mathcal{X}$, $\mathcal{Y}$) in Mod$R$. $dg \mathcal{X}$ is the class of all complexes of $R$-modules satisfying that each $X_{n}\in \mathcal{X}$ and $\text{Hom}_{\textbf{K}(\text{Mod}R)}(X^{\bullet},Y^{\bullet})=0$ for any $X^{\bullet}\in dg \mathcal{X}$ and any $Y^{\bullet}\in \widetilde{\mathcal{Y}}$. Similarly, one can define $dg\mathcal{Y}$.
\end{enumerate}
\end{definition}
We denote by $\textbf{K}_{ex}(\mathcal{C})$ and $\textbf{K}(\mathcal{C})$ the corresponding homotopy categories consisting of complexes in $ex \mathcal{C}$, and $dw \mathcal{C}$, respectively.
\begin{lemma}{\rm\cite{CLY,G03}}\label{lem4.2} For the canonical cotorsion pair {\rm$(\mathcal{P}, \text{Mod}R)$}, {\rm($dw\mathcal{P}$, $dw\mathcal{P}^{\perp}$)}, {\rm($ex\mathcal{P}$, $ex\mathcal{P}^{\perp}$)} and {\rm($dg\mathcal{P}$, $\mathcal{E}$)} are the complete cotorsion pairs in {\rm$C(\text{Mod}R)$}. Moreover, they satisfy the following conditions:
\begin{enumerate}
  \item[\rm (1)] $\widetilde{\mathcal{P}}=dw\mathcal{P}\bigcap dw\mathcal{P}^{\perp}=ex\mathcal{P}\bigcap ex\mathcal{P}^{\perp}=dg\mathcal{P}\bigcap \mathcal{E}$
  \item[\rm (2)] $\mathcal{E}\bigcap dw\mathcal{P}=ex\mathcal{P}$, $dg\mathcal{P}\subseteq dw\mathcal{P}$.
\end{enumerate}

\end{lemma}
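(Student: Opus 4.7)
The plan is to derive each assertion from Gillespie's machinery lifting a cotorsion pair from $\text{Mod}R$ to $C(\text{Mod}R)$, together with a direct unpacking of the defining perp-conditions on chain complexes.

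First, I would observe that the canonical cotorsion pair $(\mathcal{P}, \text{Mod}R)$ is complete and cogenerated by the singleton $\{R\}$. Gillespie's induction theorem then produces three complete cotorsion pairs on $C(\text{Mod}R)$ from any complete cotorsion pair $(\mathcal{X}, \mathcal{Y})$ in $\text{Mod}R$ cogenerated by a set, namely $(dw\mathcal{X}, (dw\mathcal{X})^\perp)$, $(ex\mathcal{X}, (ex\mathcal{X})^\perp)$ and $(dg\mathcal{X}, \widetilde{\mathcal{Y}})$. Specialising to $\mathcal{X} = \mathcal{P}$, $\mathcal{Y} = \text{Mod}R$, and observing that $\widetilde{\text{Mod}R} = \mathcal{E}$, yields the three complete cotorsion pairs in the statement. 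Completeness of each pair is obtained by the small object argument applied to sets of generators constructed from $\{R\}$.

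For the identities in (1), the plan is to identify each intersection with $\widetilde{\mathcal{P}}$, that is, exact complexes whose components and cycles are projective. Since such a complex is contractible with projective components, it is a projective object in $C(\text{Mod}R)$ and therefore lies in every perp class on the right of a cotorsion pair as well as in $dw\mathcal{P}$, $ex\mathcal{P}$ and $dg\mathcal{P}$, giving the containments $\widetilde{\mathcal{P}} \subseteq dw\mathcal{P}\cap dw\mathcal{P}^\perp$, etc. For the reverse containments I would invoke the standard disk-complex adjunction $\text{Ext}^1_{C(\text{Mod}R)}(X^\bullet, D^n(M)) \cong \text{Ext}^1_R(X_n, M)$ (with $D^n(M)$ the disk on $M$ in degree $n$), together with its shift: a componentwise projective $X^\bullet$ lying in $dw\mathcal{P}^\perp$ has every short exact sequence $0\to D^n(M)\to E^\bullet\to X^\bullet\to 0$ split, which by degreewise chasing forces $X^\bullet$ to be contractible with projective cycles. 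The analogous argument, using that elements of $\mathcal{E}$ are exact and $dg\mathcal{P}$ is componentwise projective, disposes of the $ex\mathcal{P}\cap ex\mathcal{P}^\perp$ and $dg\mathcal{P}\cap\mathcal{E}$ cases.

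Finally, (2) is essentially tautological: $\mathcal{E}\cap dw\mathcal{P} = ex\mathcal{P}$ follows immediately from the definition of $ex\mathcal{P}$ as exact componentwise-projective complexes, and $dg\mathcal{P} \subseteq dw\mathcal{P}$ because componentwise projectivity is already part of the definition of $dg\mathcal{P}$. The main obstacle is the converse direction in (1), where one must promote componentwise projectivity together with Ext-vanishing against a large class to full contractibility with projective cycles; since this is precisely Gillespie's theorem, the cleanest strategy is to cite \cite{G03} for the technical contractibility arguments while recording the disk-complex strategy above as orientation.
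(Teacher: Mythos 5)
The paper offers no proof of this lemma at all: it is imported verbatim from \cite{CLY} and \cite{G03}, so there is no internal argument to compare against. Your overall architecture --- completeness of the three induced pairs from Gillespie's lifting of a cotorsion pair cogenerated by a set, identification of the common core with $\widetilde{\mathcal{P}}$, and the observation that (2) is definitional --- is the correct reading of those sources, and your final decision to cite \cite{G03} for the hard contractibility step is exactly what the authors do.

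Two of your intermediate justifications, however, would fail as written. First, for the easy inclusions in (1) you argue that a complex in $\widetilde{\mathcal{P}}$ is a projective object of $C(\text{Mod}R)$ ``and therefore lies in every perp class on the right of a cotorsion pair.'' Projectivity gives $\text{Ext}^1(X^\bullet,-)=0$, which places $X^\bullet$ in every \emph{left} class; membership in $dw\mathcal{P}^\perp$ requires $\text{Ext}^1(Y^\bullet,X^\bullet)=0$ for all $Y^\bullet\in dw\mathcal{P}$, and the correct reason is that $X^\bullet\in\widetilde{\mathcal{P}}$ decomposes as a degreewise-finite sum of disks $D^n(P)$ on projectives, for which $\text{Ext}^1(Y^\bullet,D^n(P))\cong\text{Ext}^1_R(Y_{n-1},P)=0$. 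Second, for the reverse inclusions your mechanism --- splitting of extensions by disk complexes plus degreewise chasing --- does not force contractibility: any stalk complex $S^n(P)$ with $P$ projective splits against every disk yet is not exact. The device that actually works (and which the paper itself exhibits just before Lemma 4.5, in dual form) is the degreewise split sequence $0\to X^\bullet\to\bigoplus_n D^n(X_n)\to S(X^\bullet)\to 0$: since $S(X^\bullet)$ remains in the relevant left class and $X^\bullet$ lies in the corresponding right class (or in $\mathcal{E}$ for the $dg$ case), the sequence splits, exhibiting $X^\bullet$ as a summand of a contractible complex with projective components, hence in $\widetilde{\mathcal{P}}$. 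With these two repairs your sketch is a faithful reconstruction of the cited proofs.
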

\begin{lemma}\label{lem4.3} The classes $ ex\mathcal{P}^{\perp}$ and $\mathcal{E}$ are closed under cosyzygy $\Sigma$.
\end{lemma}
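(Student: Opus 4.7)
The plan is to handle the two classes in parallel: in each case, unfold the defining short exact sequence $0\to Y^\bullet\to I^\bullet\to\Sigma Y^\bullet\to 0$ coming from the injective cogenerator of $C(\mathrm{Mod}R)$, and then use a long exact sequence argument.

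For $\mathcal{E}$: given $Y^\bullet\in\mathcal{E}$, I would pick an injective complex $I^\bullet$ and an embedding $Y^\bullet\hookrightarrow I^\bullet$ to obtain a short exact sequence $0\to Y^\bullet\to I^\bullet\to\Sigma Y^\bullet\to 0$. Since injective complexes are exact (as recalled just before Lemma \ref{lem4.2}) and $Y^\bullet$ is exact by hypothesis, the associated long exact homology sequence forces $H_n(\Sigma Y^\bullet)=0$ for every $n$. Hence $\Sigma Y^\bullet\in\mathcal{E}$.

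For $ex\mathcal{P}^{\perp}$: starting from $0\to Y^\bullet\to I^\bullet\to\Sigma Y^\bullet\to 0$ with $Y^\bullet\in ex\mathcal{P}^{\perp}$, I would apply $\mathrm{Hom}_{C(\mathrm{Mod}R)}(X^\bullet,-)$ for an arbitrary $X^\bullet\in ex\mathcal{P}$, producing the piece
$$\mathrm{Ext}^1(X^\bullet,I^\bullet)\longrightarrow\mathrm{Ext}^1(X^\bullet,\Sigma Y^\bullet)\longrightarrow\mathrm{Ext}^2(X^\bullet,Y^\bullet).$$
The left term vanishes because $I^\bullet$ is injective in $C(\mathrm{Mod}R)$, so it suffices to prove $\mathrm{Ext}^2(X^\bullet,Y^\bullet)=0$. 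The crux is to dimension-shift, so I need to know that $ex\mathcal{P}$ is closed under abelian syzygies. For this, I would pick a surjection $P^\bullet\twoheadrightarrow X^\bullet$ from a projective complex $P^\bullet$, giving $0\to K^\bullet\to P^\bullet\to X^\bullet\to 0$. Since $P^\bullet$ is exact (projective complexes are exact) and $X^\bullet$ is exact, $K^\bullet$ is exact by the homology long exact sequence; and each $K_n$ sits in $0\to K_n\to P_n\to X_n\to 0$, which splits because $X_n$ is projective, so $K_n$ is projective. Hence $K^\bullet\in ex\mathcal{P}$, and the standard dimension shift yields $\mathrm{Ext}^2(X^\bullet,Y^\bullet)\cong\mathrm{Ext}^1(K^\bullet,Y^\bullet)=0$ because $Y^\bullet\in ex\mathcal{P}^{\perp}$.

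Combining both observations, $\mathrm{Ext}^1(X^\bullet,\Sigma Y^\bullet)=0$ for all $X^\bullet\in ex\mathcal{P}$, so $\Sigma Y^\bullet\in ex\mathcal{P}^{\perp}$. The only real subtlety, and hence the main potential obstacle, is the closure of $ex\mathcal{P}$ under syzygies needed to kill $\mathrm{Ext}^2$; every other step is either a formal application of Lemma \ref{lem} (for the long exact sequence) or a direct use of the fact that projective and injective complexes in $C(\mathrm{Mod}R)$ are exact.
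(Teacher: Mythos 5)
Your proposal is correct and follows essentially the same route as the paper: embed into an injective complex, use the homology long exact sequence for $\mathcal{E}$, and for $ex\mathcal{P}^{\perp}$ kill $\mathrm{Ext}^2(X^\bullet,Y^\bullet)$ by dimension shifting after checking that the syzygy $K^\bullet$ of $X^\bullet\in ex\mathcal{P}$ is exact with degreewise-split (hence projective) components. Your write-up is in fact slightly cleaner, since the paper's displayed syzygy sequence contains a typo ($N^\bullet$ where $Q^\bullet$ is meant) that you avoid.
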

\begin{proof}
Let $M^{\bullet}\in \mathcal{E}$ and $N^{\bullet}\in ex\mathcal{P}^{\perp}$. Since $C(\text{Mod}R)$ admits enough injective objects. Then, there exist two short exact sequence
\begin{equation*}
  0\longrightarrow M^{\bullet}\longrightarrow E^{\bullet}(M^{\bullet})\rightarrow \Sigma M^{\bullet}\longrightarrow 0
\end{equation*}
  \begin{equation}\label{eq4.1}
  0\longrightarrow N^{\bullet}\longrightarrow E^{\bullet}(N^{\bullet})\rightarrow \Sigma N^{\bullet}\longrightarrow 0
  \end{equation}
with $E^{\bullet}(M^{\bullet})$ and $E^{\bullet}(N^{\bullet})$ are injective complexes. It is easy to check that $\Sigma M^{\bullet}$ by the long exact sequence theorem and injective complexes are exact. Hence, $\mathcal{E}$ is closed under cosyzygy.\par
Now, it remains to show that $\Sigma N^{\bullet}\in ex\mathcal{P}^{\perp}$. Let $Q^{\bullet}$ be a complex of $ ex\mathcal{P}$. Applying the functor $\text{Hom}(Q^{\bullet},-)$ to the exact sequence (\ref{eq4.1}), we obtain a long exact sequence
$$\text{Ext}^{1}(Q^{\bullet}, N^{\bullet})\longrightarrow \text{Ext}^{1}(Q^{\bullet}, E^{\bullet}(N^{\bullet}))\longrightarrow
\text{Ext}^{1}(Q^{\bullet},  \Sigma N^{\bullet})\longrightarrow
\text{Ext}^{2}(Q^{\bullet},  N^{\bullet}).$$
Note that there is a short exact sequence
\begin{equation}\label{eq4.2}
  0\longrightarrow \Omega Q^{\bullet}\longrightarrow P^{\bullet}\longrightarrow N^{\bullet}\longrightarrow0.
\end{equation}

 Thus, we know that
$\text{Ext}^{2}(Q^{\bullet},  N^{\bullet})\cong \text{Ext}^{1}(\Omega Q^{\bullet},  N^{\bullet})$. We claim that $\Omega Q^{\bullet}\in ex\mathcal{P}$. Indeed, it is easy to see that  $\Omega Q^{\bullet}$ is exact. Moreover, the sequence (\ref{eq4.2}) is degree-wise split. Hence, each $Q_{n}$ is a projective module and so, the desired result comes. In this case, $\text{Ext}^{2}(Q^{\bullet},  N^{\bullet})=\text{Ext}^{1}(Q^{\bullet}, E^{\bullet}(N^{\bullet}))=0$. Therefore, $\text{Ext}^{1}(Q^{\bullet},  \Sigma N^{\bullet})=0$. This completes the proof.
\end{proof}
Now, we can apply one of our main results to reprove the existence of the following recollement.
\begin{proposition}{\rm\cite{CLY}}\label{prop4.4}
There exists the following recollement of triangulated category
{\rm$$\xymatrixcolsep{5pc}\xymatrix{
&\textbf{K}_{ex}(\mathcal{P})\ar[r]^{I}
&\textbf{K}(\mathcal{P})\ar@/_1.5pc/[l]_{I_{\lambda}} \ar@/^1.5pc/[l]^{I_{\del}} \ar[r]^{J}
&\textbf{D}(R)\ar@/^1.5pc/[l]^{J_{\del}}\ar@/_1.5pc/[l]_{J_{\lambda}}}.$$}
\end{proposition}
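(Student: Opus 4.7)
The plan is to derive Proposition~\ref{prop4.4} as a direct corollary of Theorem~\ref{main2} applied to the abelian (hence extriangulated) category $C(\text{Mod}R)$ equipped with the three cotorsion pairs supplied by Lemma~\ref{lem4.2}. Specifically, I would set
\[(\S,\T)=(dw\mathcal{P},\,dw\mathcal{P}^{\perp}),\quad (\U,\V)=(ex\mathcal{P},\,ex\mathcal{P}^{\perp}),\quad (\X,\Y)=(dg\mathcal{P},\,\mathcal{E}),\]
so that by Lemma~\ref{lem4.2}(1) we have a common core
$\W=\S\cap\T=\U\cap\V=\X\cap\Y=\widetilde{\mathcal{P}}.$
Thus these three cotorsion pairs are concentric and the initial hypothesis of Theorem~\ref{main2} is satisfied.

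Next I would verify the remaining four hypotheses of Theorem~\ref{main2} one by one. The equality $\Y\cap\S=\U$ is exactly the identity $\mathcal{E}\cap dw\mathcal{P}=ex\mathcal{P}$ from Lemma~\ref{lem4.2}(2); the inclusion $\X\subseteq\S$ is $dg\mathcal{P}\subseteq dw\mathcal{P}$, also from Lemma~\ref{lem4.2}(2); finally the two closure conditions $\Sigma\V\subseteq\V$ and $\Sigma\Y\subseteq\Y$ are provided by Lemma~\ref{lem4.3}. Invoking the second recollement in Theorem~\ref{main2} then produces the recollement of additive categories
\[\xymatrixcolsep{4pc}\xymatrix{
&ex\mathcal{P}/\widetilde{\mathcal{P}}\ar[r]^{I}
&dw\mathcal{P}/\widetilde{\mathcal{P}}\ar@/_1.5pc/[l]_{I_{\lambda}} \ar@/^1.5pc/[l]^{I_{\del}} \ar[r]^{J'}
&dg\mathcal{P}/\widetilde{\mathcal{P}}\ar@/^1.5pc/[l]^{J'_{\del}}\ar@/_1.5pc/[l]_{J'_{\lambda}}}.\]

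The third step is to identify the three additive quotient categories with the classical triangulated categories appearing in Krause's recollement. The class $\widetilde{\mathcal{P}}$ coincides with the categorical projective complexes (every exact complex of projectives with projective cycles is contractible), and for any two complexes whose target lies in $dw\mathcal{P}$, factoring through an object of $\widetilde{\mathcal{P}}$ is equivalent to being null-homotopic. Therefore
\[dw\mathcal{P}/\widetilde{\mathcal{P}}\simeq \textbf{K}(\mathcal{P}),\qquad ex\mathcal{P}/\widetilde{\mathcal{P}}\simeq \textbf{K}_{ex}(\mathcal{P}),\]
and these identifications carry the standard triangulated structures on the homotopy categories. For the third term I would use the characterization of $dg\mathcal{P}$ as the class of K-projective complexes of projectives (this is built into the definition of $dg\mathcal{P}$ via $\text{Hom}_{\textbf{K}}(X^\bullet,E^\bullet)=0$ for $E^\bullet\in\mathcal{E}$), together with the standard fact that the inclusion $\textbf{K}(dg\mathcal{P})\hookrightarrow \textbf{K}(\text{Mod}R)$ composed with the Verdier localization to $\textbf{D}(R)$ is an equivalence, giving $dg\mathcal{P}/\widetilde{\mathcal{P}}\simeq \textbf{D}(R)$.

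The main obstacle is the final identification $dg\mathcal{P}/\widetilde{\mathcal{P}}\simeq\textbf{D}(R)$, together with verifying that the four adjoint functors produced by Theorem~\ref{main2} are compatible with the triangulated structures on $\textbf{K}_{ex}(\mathcal{P})$, $\textbf{K}(\mathcal{P})$ and $\textbf{D}(R)$; the first equivalence requires the existence of K-projective resolutions by complexes of projectives, which is classical but external to the present paper. Once these are in hand, the additive recollement of Theorem~\ref{main2} upgrades to the desired triangulated recollement, recovering Krause's theorem.
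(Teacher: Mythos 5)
Your proposal is correct and follows essentially the same route as the paper: apply Theorem~\ref{main2} to the cotorsion pairs $(dw\mathcal{P},dw\mathcal{P}^{\perp})$, $(ex\mathcal{P},ex\mathcal{P}^{\perp})$, $(dg\mathcal{P},\mathcal{E})$ with core $\widetilde{\mathcal{P}}$, using Lemmas~\ref{lem4.2} and \ref{lem4.3} to check the hypotheses, and then invoke the standard triangulated equivalences $ex\mathcal{P}/\widetilde{\mathcal{P}}\cong\textbf{K}_{ex}(\mathcal{P})$, $dw\mathcal{P}/\widetilde{\mathcal{P}}\cong\textbf{K}(\mathcal{P})$, $dg\mathcal{P}/\widetilde{\mathcal{P}}\cong\textbf{D}(R)$. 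You actually make the hypothesis-checking and the identification of the quotient categories more explicit than the paper does, which simply cites these facts as well known.
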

\begin{proof}
By Theorem \ref{main2}, Lemma \ref{lem4.2} and Lemma \ref{lem4.3}, we have the following recollement
$$\xymatrixcolsep{5pc}\xymatrix{
&ex\mathcal{P}/\widetilde{\mathcal{P}}\ar[r]^{I}
&dw\mathcal{P}/\widetilde{\mathcal{P}}\ar@/_1.5pc/[l]_{I_{\lambda}} \ar@/^1.5pc/[l]^{I_{\del}} \ar[r]^{J}
&dg\mathcal{P}/\widetilde{\mathcal{P}}\ar@/^1.5pc/[l]^{J_{\del}}\ar@/_1.5pc/[l]_{J_{\lambda}}}.$$
 It is well-known that there exist triangulated equivalences
$ex\mathcal{P}/\widetilde{\mathcal{P}}\cong\textbf{K}_{ex}(\mathcal{P})$, $dw\mathcal{P}/\widetilde{\mathcal{P}}\cong\textbf{K}(\mathcal{P})$ and $dg\mathcal{P}/\widetilde{\mathcal{P}}\cong\textbf{D}(R)$. Moreover, it is easy to see that the functor $I$ and $J_{\lambda}$ are triangulated functors. By adjointness, we know that the remained four functors are triangulated functors.
\end{proof}
Next, we hope to get the Krause's recollement from the cotorsion pair on the category of complexes. The following observation is very important. Let $X^{\bullet}$ be a complex. Now, we construct a short exact of complexes as follows.
 $$\xymatrix@R=0.6cm{&\vdots\ar[d]&&\vdots\ar[d]&&\vdots\ar[d]&\\
0\ar[r]&X_{n+1}\ar[rr]^{\left[
              \begin{smallmatrix}
                1\\d_{n+1}
              \end{smallmatrix}
            \right]\quad}\ar[dd]^{d_{n+1}}&&X_{n+1}\oplus X_{n}\ar[dd]^{\left[
              \begin{smallmatrix}
                0&1\\
                0&0
              \end{smallmatrix}
            \right]}\ar[rr]^-{\left[
              \begin{smallmatrix}
              -d_{n+1}&1
              \end{smallmatrix}
            \right]}&&X_{n}\ar[r]\ar[dd]^{-d_{n}}&0\\
&&&\\
0\ar[r]&X_{n}\ar[rr]^{\left[
              \begin{smallmatrix}
                1\\d_n
              \end{smallmatrix}
            \right]\quad}\ar[d]&&X_{n}\oplus X_{n-1}\ar[d]\ar[rr]^{\quad\left[
              \begin{smallmatrix}
              -d_n&1
              \end{smallmatrix}
            \right]}&&X_{n-1}\ar[r]\ar[d]&0\\
&\vdots&&\vdots&&\vdots&}$$
That is, there exists an exact sequence
 $$0\longrightarrow X^{\bullet}\xrightarrow{~\left[
              \begin{smallmatrix}
                1\\d
              \end{smallmatrix}
            \right]~} \prod_{n\in\mathbb{Z}}\overline{X_{n}}\xrightarrow{~\left[
              \begin{smallmatrix}
               -d&~1
              \end{smallmatrix}
            \right]~} S(X)\longrightarrow0 .$$
From this observation, we have the following result.
\begin{lemma}\label{lem4.5}
If $X^{\bullet}\in dg\mathcal{I}$, then there exists an isomorphism of complexes {\rm$\Sigma X^{\bullet}\cong S(X)$}. In particularly, $dg\mathcal{I}$ is closed under the cosyzygy $\Sigma$.
\end{lemma}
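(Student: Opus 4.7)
The approach is to leverage the explicit short exact sequence
\[
0 \longrightarrow X^{\bullet} \xrightarrow{\left[\begin{smallmatrix} 1\\d \end{smallmatrix}\right]} \prod_{n\in\mathbb{Z}} \overline{X_{n}} \xrightarrow{\left[\begin{smallmatrix} -d & 1 \end{smallmatrix}\right]} S(X^{\bullet}) \longrightarrow 0
\]
displayed immediately before the lemma. First I would observe that since $X^{\bullet}\in dg\mathcal{I}$ has every component $X_{n}$ injective in $\mathrm{Mod}R$, each disc complex $\overline{X_{n}}$ is an injective object of $C(\mathrm{Mod}R)$, and hence so is the product $\prod_{n\in\mathbb{Z}}\overline{X_{n}}$ (the injective objects of $C(\mathrm{Mod}R)$ are precisely the products of discs of injective modules). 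So the displayed short exact sequence is an $\mathbb{E}$-triangle whose middle term is an injective complex, and by the very definition of the cosyzygy functor $\Sigma$ in an extriangulated category with enough injectives this exhibits $S(X^{\bullet})$ as a valid choice of $\Sigma X^{\bullet}$. Reading the isomorphism off the cokernel yields the desired \emph{isomorphism of complexes} $\Sigma X^{\bullet}\cong S(X^{\bullet})$.

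For the ``in particular'' clause I would verify that $S(X^{\bullet})\in dg\mathcal{I}$. The componentwise condition is automatic since $S(X^{\bullet})_{n}=X_{n-1}$ is injective. To get the Hom-vanishing condition $\mathrm{Hom}_{\mathbf{K}(\mathrm{Mod}R)}(E^{\bullet},S(X^{\bullet}))=0$ for each exact $E^{\bullet}$, I would pass the short exact sequence above to a distinguished triangle
\[
X^{\bullet}\longrightarrow \prod_{n\in\mathbb{Z}}\overline{X_{n}}\longrightarrow S(X^{\bullet})\longrightarrow S(X^{\bullet})
\]
in $\mathbf{K}(\mathrm{Mod}R)$ and apply $\mathrm{Hom}_{\mathbf{K}}(E^{\bullet},-)$. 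Two vanishings then squeeze the middle: (i) each $\overline{X_{n}}$ is contractible, hence so is their product, killing $\mathrm{Hom}_{\mathbf{K}}(E^{\bullet},\prod_{n}\overline{X_{n}})$; and (ii) $\mathrm{Hom}_{\mathbf{K}}(E^{\bullet},S(X^{\bullet})[1])\cong \mathrm{Hom}_{\mathbf{K}}(E^{\bullet}[-1],X^{\bullet})=0$ since $X^{\bullet}\in dg\mathcal{I}$ and the shift of an exact complex is exact.

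The only conceptual subtlety I anticipate is keeping straight the difference between the extriangulated cosyzygy $\Sigma$ (a priori only well-defined in the stable category, depending on a chosen injective embedding, cf.\ Lemma~\ref{y7}(2)) and the concrete cochain-level suspension $S$. The point is that the explicit presentation before the lemma already delivers an honest short exact sequence whose middle term is injective, so no identification modulo injectives is needed: one reads off a literal isomorphism of complexes. Beyond this, everything in the argument---injectivity of discs over injective modules, contractibility of discs (and of products thereof), and shift-invariance of $\mathcal{E}$---is standard folklore for $C(\mathrm{Mod}R)$ and requires no new input.
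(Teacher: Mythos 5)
Your proposal is correct and follows essentially the same route as the paper: one observes that the displayed degreewise-split short exact sequence has middle term $\prod_{n}\overline{X_{n}}$, which is an injective complex because each $X_{n}$ is injective, so $S(X^{\bullet})$ serves as the cosyzygy $\Sigma X^{\bullet}$ on the nose. Your additional verification that $S(X^{\bullet})$ actually lies in $dg\mathcal{I}$ (componentwise injectivity plus the $\mathrm{Hom}_{\mathbf{K}}(\mathcal{E},-)$-vanishing, squeezed between the contractible product of discs and the shift of $X^{\bullet}$) is a detail the paper's proof leaves implicit, and it is carried out correctly.
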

\begin{proof}
Since $X^{\bullet}\in dg\mathcal{I}$, $X_{n}$ are injective modules for all $n\in\mathbb{Z}$. Note that $\prod_{n\in\mathbb{Z}}\overline{X_{n}}$ is an injective complex since $\overline{X_{n}}$ are injective complexes for all $n\in\mathbb{Z}$. Then we obtain the desired result.
\end{proof}
Finally, we are in proposition to reobtain the Krause's recollement, which also reprove the Proposition 4.3 in \cite{CLY}.
\begin{proposition} For the canonical cotorsion pair {\rm$(\text{Mod}R,   \mathcal{I})$}, {\rm($^{\perp}dw\mathcal{I}$, $dw\mathcal{I}$)}, {\rm($^{\perp}ex\mathcal{I}$, $ex\mathcal{I}$)} and {\rm($\mathcal{E}$, $dg\mathcal{I}$)} are the completed cotorsion pairs in {\rm$C(\text{Mod}R)$}. Moreover, they induces Krause's recollement
{\rm$$\xymatrixcolsep{5pc}\xymatrix{
&\textbf{K}_{ex}(\mathcal{I})\ar[r]^{F}
&\textbf{K}(\mathcal{I})\ar@/_1.5pc/[l]_{F_{\lambda}} \ar@/^1.5pc/[l]^{F_{\rho}} \ar[r]^{G'}
&\textbf{D}(R)\ar@/^1.5pc/[l]^{G'_{\rho}}\ar@/_1.5pc/[l]_{G'_{\lambda}}}.$$}

\end{proposition}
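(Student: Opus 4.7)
The plan is to parallel the proof of Proposition \ref{prop4.4}, now applying Theorem \ref{main1} (rather than Theorem \ref{main2}) to the three cotorsion pairs on $C(\text{Mod}R)$ induced by the canonical injective cotorsion pair $(\text{Mod}R,\mathcal{I})$, namely
$$(\S,\T)=({}^{\perp}dw\mathcal{I},\,dw\mathcal{I}),\quad (\U,\V)=({}^{\perp}ex\mathcal{I},\,ex\mathcal{I}),\quad (\X,\Y)=(\mathcal{E},\,dg\mathcal{I}).$$
Once Theorem \ref{main1} delivers its second recollement for these data, I would transport it along the known equivalences $ex\mathcal{I}/\widetilde{\mathcal{I}}\cong \textbf{K}_{ex}(\mathcal{I})$, $dw\mathcal{I}/\widetilde{\mathcal{I}}\cong \textbf{K}(\mathcal{I})$, $dg\mathcal{I}/\widetilde{\mathcal{I}}\cong \textbf{D}(R)$ to obtain Krause's recollement.

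First, I would establish the three cotorsion pairs and identify their common core. By the injective analogue of Lemma \ref{lem4.2}, which follows from Gillespie's work \cite{G01,G02,G03}, each pair is complete in $C(\text{Mod}R)$, and
$$\widetilde{\mathcal{I}}=dw\mathcal{I}\cap {}^{\perp}dw\mathcal{I}=ex\mathcal{I}\cap {}^{\perp}ex\mathcal{I}=dg\mathcal{I}\cap \mathcal{E},$$
and additionally $\mathcal{E}\cap dw\mathcal{I}=ex\mathcal{I}$ and $dg\mathcal{I}\subseteq dw\mathcal{I}$. Putting $\W=\widetilde{\mathcal{I}}$, these equalities simultaneously yield the concentric condition $\S\cap\T=\U\cap\V=\X\cap\Y$ and the two inclusion-type hypotheses $\X\cap\T=\V$ and $\Y\subseteq\T$ of Theorem \ref{main1}.

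Next, I would verify the two stability conditions $\Sigma\V\subseteq\V$ and $\Sigma\Y\subseteq\Y$. The second is exactly Lemma \ref{lem4.5}. For the first, I would mimic the argument of Lemma \ref{lem4.3}: given $X^{\bullet}\in ex\mathcal{I}$, form a short exact sequence $0\to X^{\bullet}\to E^{\bullet}\to \Sigma X^{\bullet}\to 0$ with $E^{\bullet}$ an injective complex (hence exact with injective components). Since each $X_n$ is injective, the componentwise sequence $0\to X_n\to E_n\to(\Sigma X^{\bullet})_n\to 0$ splits, so $(\Sigma X^{\bullet})_n$ is a direct summand of $E_n$ and thus injective; the long exact homology sequence together with exactness of $X^{\bullet}$ and $E^{\bullet}$ forces $\Sigma X^{\bullet}$ to be exact, so $\Sigma X^{\bullet}\in ex\mathcal{I}$.

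With all four hypotheses of Theorem \ref{main1} verified, its second recollement specialises to
$$\xymatrixcolsep{5pc}\xymatrix{
&ex\mathcal{I}/\widetilde{\mathcal{I}}\ar[r]^{F}
&dw\mathcal{I}/\widetilde{\mathcal{I}}\ar@/_1.5pc/[l]_{F_{\lambda}} \ar@/^1.5pc/[l]^{F_{\del}} \ar[r]^{G'}
&dg\mathcal{I}/\widetilde{\mathcal{I}}\ar@/^1.5pc/[l]^{G'_{\del}}\ar@/_1.5pc/[l]_{G'_{\lambda}}},$$
which, after transport along the three equivalences above, becomes Krause's recollement. As in the proof of Proposition \ref{prop4.4}, the inclusions $F$ and $G'_{\del}$ are visibly triangulated, and adjointness forces the remaining four functors to be triangulated as well. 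The step I expect to be the main obstacle is the cotorsion-pair bookkeeping in the first paragraph, i.e.\ cleanly deducing the injective analogue of Lemma \ref{lem4.2} and the equality of the three candidate cores from Gillespie's machinery; by contrast, the stability $\Sigma ex\mathcal{I}\subseteq ex\mathcal{I}$ is the only genuinely new computation, and as sketched above it is short.
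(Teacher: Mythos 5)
Your proposal is correct and follows essentially the same route as the paper: cite the completeness and core/intersection identities for the three injective cotorsion pairs from Gillespie's work, check $\Sigma\,ex\mathcal{I}\subseteq ex\mathcal{I}$ via the degree-wise split embedding into an injective complex and $\Sigma\, dg\mathcal{I}\subseteq dg\mathcal{I}$ via Lemma \ref{lem4.5}, apply the second recollement of Theorem \ref{main1}, and transport along the equivalences with $\textbf{K}_{ex}(\mathcal{I})$, $\textbf{K}(\mathcal{I})$ and $\textbf{D}(R)$. No substantive differences from the paper's argument.
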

\begin{proof} Following  the references \cite{CLY,G03}, we know that these three cotorsion pairs are complete and they also satisfy the following conditions:
\begin{enumerate}
  \item[(1)] $\widetilde{\mathcal{I}}={^{\perp}dw\mathcal{I}}\cap dw\mathcal{I}={^{\perp}ex\mathcal{I}}\cap ex\mathcal{I}=\mathcal{E}\cap dg\mathcal{I}$
  \item[(2)] $\mathcal{E}\cap dw\mathcal{I}=ex\mathcal{I}$, $dg\mathcal{I}\subseteq dw\mathcal{I}$.
\end{enumerate}
Now, we show that $ex\mathcal{I}$ is closed under cosyzygy. Indeed, there exists a short exact sequence $0\longrightarrow X^{\bullet}\longrightarrow E^{\bullet}(X^{\bullet})\longrightarrow \Sigma X^{\bullet}\longrightarrow 0$. Clearly, $\Sigma X^{\bullet}$ is an exact complex. It is easy to see that this sequence is degree-wise split. Thus, each component $(\Sigma X^{\bullet})_{n}$ of $\Sigma X^{\bullet}$ is injective and so, $\Sigma X^{\bullet}\in ex\mathcal{I}$.\par
By Lemma \ref{lem4.5}, $dg\mathcal{I}$ is also closed under the cosyzygy $\Sigma$. Therefore, by Theorem \ref{main1}, we know that there exists a recollement of additive categories
$$\xymatrixcolsep{5pc}\xymatrix{
&ex\mathcal{I}/\widetilde{\mathcal{I}}\ar[r]^{F}
&dw\mathcal{I}/\widetilde{\mathcal{I}}\ar@/_1.5pc/[l]_{F_{\lambda}} \ar@/^1.5pc/[l]^{F_{\rho}} \ar[r]^{G'}
&dg\mathcal{I}/\widetilde{\mathcal{I}}\ar@/^1.5pc/[l]^{G'_{\rho}}\ar@/_1.5pc/[l]_{G'_{\lambda}}}.$$
It is well-known that   there exist triangulated equivalences $ex\mathcal{I}/\widetilde{\mathcal{I}}\cong\textbf{K}_{ex}(\mathcal{I})$, $dw\mathcal{I}/\widetilde{\mathcal{I}}\cong\textbf{K}(\mathcal{I})$ and $dg\mathcal{I}/\widetilde{\mathcal{I}}\cong\textbf{D}(R)$. The remaining arguments are similar to Proposition \ref{prop4.4}.
\end{proof}

Yonggang Hu$^{a,c}$ and Panyue Zhou$^{b,c}$\\
$^a$ College of Applied Sciences, Beijing University of Technology, 100124 Beijing, P. R. China.\\
$^b$College of Mathematics, Hunan Institute of Science and Technology, 414006, Yueyang, Hunan, P. R. China.\\
$^c$D\'{e}partement de Math\'{e}matiques, Universit\'{e} de Sherbrooke, Sherbrooke,
Qu\'{e}bec J1K 2R1, Canada.\\
Yonggang Hu E-mail: \textsf{huyonggang@emails.bjut.edu.cn}\\
Panyue Zhou E-mail: \textsf{panyuezhou@163.com}


\begin{thebibliography}{99}

\bibitem[B]{B} F. Borceux. Handbook of categorical algebra, Volume 1. Basic category theory. Encyclopedia of Mathematics and its Applications, 50. Cambridge University Press, Cambridge, 1994.
\bibitem[BBD]{BBD} A. Beilinson, J. Bernstein, P. Deligne.
Faisceaux pervers. (French) [Perverse sheaves] Analysis and topology on singular spaces, I (Luminy, 1981), 5-171, Ast\'{e}risque, 100, Soc. Math. France, Paris, 1982.

\bibitem[CLY]{CLY} W. Chen, Z. Liu, X. Yang.
Recollements associated to cotorsion pairs.
J. Algebra Appl. 17(5): 1-15, 2018.

\bibitem[EJ]{EJ}E. E. Enochs, and  O. M. G. Jenda, Relative Homological Algebra. Volume 2. Berlin, Boston: De Gruyter, 2011.

\bibitem[GT]{GT}R. G\"{o}bel and J. Trlifaj, Approximations and endomorphism algebras of modules, Walter de Gruyter, Berlin, 2006.

\bibitem[G01]{G01} J. Gillespie, The flat model structure on Ch(R), Trans. Amer. Math. Soc. 356(8): 3369-3390, 2004.

\bibitem[G02]{G02}J. Gillespie, Cotorsion pairs and degreewise homological model structures, Homology, Homotopy Appl. 10(1):  283-304, 2008.

\bibitem[G03]{G03} J. Gillespie, Gorenstein complexes and recollements from cotorsion pairs, Adv. Math. 291: 859-911, 2016.

\bibitem[LN]{LN}
Y. Liu, H. Nakaoka.
 Hearts of twin cotorsion pairs on extriangulated categories.
  J. Algebra 528: 96-149, 2019.


\bibitem[MV]{MV} R. MacPherson, K. Vilonen. Elementary construction of perverse sheaves. Invent. Math. 84(2): 403-435, 1986.



\bibitem[NP]{NP} H. Nakaoka, Y. Palu.  Extriangulated categories, Hovey twin cotorsion pairs and model structures. Cah. Topol. G¨¦om. Diff¨¦r. Cat¨¦g. 60(2): 117-193, 2019.

\bibitem[SS]{SS} J. \v{S}aroch, J. \v{S}\v{t}ov\'{\i}\v{c}ek, Singular compactness and definability for $\Sigma$-cotorsion and Gorenstein modules, arXiv :1804.09080.
\bibitem[WL]{WL} M.  Wang,  Z.  Lin. Recollement of additive quotient categories. arXiv:1502.00479, 2015.



\bibitem[ZW]{ZW} Q. Zheng, J. Wei.  One-sided triangulated categories induced by concentric twin cotorsion pairs.
To appear in J. Algebra Appl. DOI: 10.1142/S021949882050142X.

\bibitem[ZZ]{ZZ} P. Zhou, B. Zhu, Triangulated quotient categories revisited. J. Algebra. 502: 196-232, 2018.

\bibitem[ZhZ]{ZhZ} B. Zhu B, X. Zhuang. Tilting subcategories in extriangulated categories. arXiv:1907.00747, 2019.


\end{thebibliography}
\end{document}